\DeclareFontFamily{U}{matha}{\hyphenchar\font45}
\DeclareFontShape{U}{matha}{m}{n}{
	<5> <6> <7> <8> <9> <10> gen * matha
	<10.95> matha10 <12> <14.4> <17.28> <20.74> <24.88> matha12
}{}
\DeclareSymbolFont{matha}{U}{matha}{m}{n}
\DeclareMathSymbol{\Lt}{3}{matha}{"CE}
\DeclareMathSymbol{\Gt}{3}{matha}{"CF}
\DeclareSymbolFont{mathc}{OML}{txmi}{m}{it}
\DeclareMathSymbol{\varvv}{\mathord}{mathc}{118}
\DeclareMathSymbol{\varnu}{\mathord}{mathc}{"17}
\DeclareSymbolFont{mathd}{OML}{ztmcm}{m}{it}
\DeclareMathSymbol{\varalpha}{\mathord}{mathd}{11}
\def\valpha{\scalebox{0.87}{$\varalpha$}}
\def\salpha{\scalebox{0.62}{$\varalpha$}}
\DeclareMathSymbol{\vvepsilon}{\mathord}{mathd}{15}
\def\vepsilon{\scalebox{0.87}{$\vvepsilon$}}
\def\sepsilon{\scalebox{0.62}{$\vvepsilon$}}
\def\ssepsilon{\scalebox{0.5}{$\vvepsilon$}}
\DeclareMathSymbol{\varchi}{\mathord}{mathd}{31}
\def\vchi{\scalebox{0.9}{$\varchi$}}
 \newcommand{\BZ}{{\mathbb {Z}}}
\def\CalJ{\text{\usefont{U}{dutchcal}{m}{n}J}\hskip 0.5pt}
\def\CalL{\text{\usefont{U}{dutchcal}{m}{n}L}\hskip 0.5pt}
\def\CalK{\text{\usefont{U}{dutchcal}{m}{n}K}\hskip 0.5pt}
\newcommand{\GL}{{\mathrm {GL}}}
\newcommand{\sstyle}{\scriptstyle}
\newcommand{\ra}{\rightarrow}
\def\-{^{-1}}
\def\sasymp{\text{ \small $\asymp$ }}
\def\mod{\mathrm{mod}\, }
\def\sumx{\sideset{}{^\star}\sum}
\def\tw{\textit{w}}
\def\nd{\mathrm{d}}
\def\lp {\left (}
\def\rp {\right )}
\def\Voronoi{Vorono\" \i \hskip 2.5 pt }
\renewcommand{\Im}{{\mathrm{Im} }}
\renewcommand{\Re}{{\mathrm{Re} }}
\def\shskip{\hskip 1pt}
\g@addto@macro\normalsize{\setlength\abovedisplayskip{3pt}}
\g@addto@macro\normalsize{\setlength\belowdisplayskip{3pt}}
\newcommand{\delete}[1]{}
\theoremstyle{plain}
\newtheorem{thm}{Theorem}[section] \newtheorem{cor}[thm]{Corollary}
\newtheorem{lem}[thm]{Lemma}  \newtheorem{prop}[thm]{Proposition}
\newtheorem {rem}[thm]{Remark}
\newtheorem*{acknowledgement}{Acknowledgements}
\numberwithin{equation}{section}
\begin{document}

	\title{A Bessel delta-method and exponential sums for $\mathrm{GL} (2)$}
	\author[K. Aggarwal, R. Holowinsky, Y. Lin, and Z. Qi]{Keshav Aggarwal, Roman Holowinsky, Yongxiao Lin, and Zhi Qi}
	\address{Department of Mathematics and Statistics, The University of Maine, 5752 Neville Hall, Room 333 Orono, ME 04469, USA}
	\email{keshav.aggarwal@maine.edu}
	\address{Department of Mathematics, The Ohio State University\\ 231 W 18th Avenue\\
		Columbus, Ohio 43210, USA}
	\email{holowinsky.1@osu.edu}
	\address{EPFL SB MATHGEOM TAN, Station 8, CH-1015, Lausanne, Switzerland}
	\email{yongxiao.lin@epfl.ch}
	\address{School of Mathematical Sciences, Zhejiang University, Hangzhou, 310027, China}
	\email{zhi.qi@zju.edu.cn}

	\begin{abstract}
		In this paper, we introduce a simple Bessel $\delta$-method to the theory of exponential sums for $\rm GL_2$. Some results of Jutila on exponential sums are generalized in a less technical manner to holomorphic newforms of arbitrary level and nebentypus. In particular, this gives a short proof for the Weyl-type subconvex  bound in the $t$-aspect for the associated $L$-functions.
	\end{abstract}

	\subjclass[2010]{11L07, 11F30, 11F66}
	\keywords{Fourier coefficients, cusp forms, exponential sums, delta method}
	
	\maketitle

	\section{Introduction}

	It is a classical problem to estimate exponential sums involving the Fourier coefficients of a modular form. 
	Let $g \in S^{\star}_k (M, \xi)$ be a holomorphic cusp newform of level $M$, weight $k$, nebentypus character $\xi$, with the Fourier expansion
	$$g (z) = \sum_{n=1}^{\infty} \lambdaup_g (n) n^{(k-1)/2} e (n z), \quad e(z) = e^{2 \pi i z} ,$$
	for  $\Im \, z > 0$. For example, it is well-known that for any real $\gamma$ and $N \geqslant 1$,
	\begin{align}\label{0eq: holomorphic form}
		\sum_{n \shskip \leqslant N} \lambdaup_g (n) e (\gamma n) \Lt_g N^{1/2} \log (2 N),
	\end{align}
	with the implied constant depending only on $g$ (see  \cite[Theorem 5.3]{Iwaniec-Topics}). This is a classical estimate due to Wilton. This type of estimates with uniformity in $\gamma$ was generalized by Stephen D. Miller to cusp forms for $\GL_3 (\BZ)$ in \cite{Miller-Wilton}. 
	
	In this paper,  we consider the following exponential sum (and its variants), 
	\begin{align}\label{0eq: S sharp}
		S^{\scriptscriptstyle \sharp}(N) = S_{f }^{\scriptscriptstyle \sharp}(N) =\sum_{N  \leqslant n\shskip \leqslant 2N}\lambdaup_g(n)\,e(f  (n) ),  
	\end{align}
	for a phase function $f  $ of the form  
	\begin{align}\label{0eq: f = T phi}
		f  (x) = T \phi (x/ N) + \gamma x,
	\end{align}
	where $\phi  $ is real-valued and smooth (see  Theorem \ref{main-theorem}),  $\gamma $ is real, and  $N, T \geqslant 1$ are large parameters. We assume here that $\phi  $ is {\it not} a linear function, as otherwise the sum is already estimated in \eqref{0eq: holomorphic form}. 
	As usual, we shall be mainly investigating the smoothed exponential sum 
	\begin{align}
		S (N) =  S_{f } (N) = \sum_{n=1}^\infty \lambdaup_g(n) e(f  (n) ) V\left(\frac{n}{N}\right),
	\end{align}
	for a certain smooth weight function $V \in  C_c^{\infty} (0, \infty) $ supported in $[1, 2]$ as described in Theorem \ref{main-theorem}. 
	
	This type of exponential sums (with $\gamma = 0$) for modular forms $g$ of level $M = 1$ were first studied by Jutila \cite{Jut87}, using  Farey fractions,  the \Voronoi summation formula, and stationary phase analysis. See also \cite[\S 10]{Huxley}  for an account of Jutila's method.

	Thanks to the Rankin--Selberg theory, we know that  $|\lambdaup_g (n)|$'s obey the Ramanujan conjecture on average:
	\begin{align}\label{2eq: Ramanujan}
		\sum_{n \shskip \leqslant N} |\lambdaup_g (n)|^2 \Lt_{g} N. 
	\end{align}  
	Moreover, by the work of  Deligne \cite{Deligne} and Deligne--Serre \cite{Deligne-Serre} (the latter is for $k=1$), the  Ramanujan conjecture for holomorphic cusp forms is now well-known:
	\begin{align}\label{2eq: Ramanujan, 0}
		\lambdaup_g (n) \Lt n^{\sepsilon }.
	\end{align} 
	An application of the Cauchy--Schwarz inequality followed by \eqref{2eq: Ramanujan} yields the trivial bounds $S(N)$, $  S^{\scriptscriptstyle \sharp}(N)\Lt_{ g} N$. Thus one aims to improve over these trivial bounds or, in other words, to show that there is no correlation between $ \lambdaup_g (n) $ and $e (f(n))$.

	The primary purpose of this paper is to find a $\delta$-method which is analytically richer so that the stationary phase analysis at later stages becomes cleaner. It turns out that an added benefit of our pursuit is a generalization of some results in Jutila's treatise \cite{Jut87} to modular forms of arbitrary level and nebentypus. An application amongst others is the Weyl-type subconvex bound for the associated $L$-functions in the $t$-aspect.  
	
	The main novelty of our work is a simple Bessel $\delta$-method to be described as follows. 
	
	\subsection*{A simple Bessel $\delta$-method}

	As usual, let $e (x) = e^{ 2\pi i x}$ and let $J_{\varnu} (x)$ be the $J$-Bessel function of order $\varnu$. For a condition $\mathrm{C}$, let  $\delta (\mathrm{C})$ denote the Kronecker $\delta$ that detects $\mathrm{C}$.	
	
	We fix a smooth bump function $U$ in $C_c^{\infty}(0, \infty)$. 
	Our Bessel  $\delta$-method is based on the observation that for a prime $p$, some large parameters $N, X $, and integers $r, n \in [N, 2N]$, one has
	\begin{equation*}
		\begin{split}
			&\hskip 12.5pt \frac 1 p \sum_{a (\mod p)}e\left(\frac{a(n-r)}{p}\right)\cdot \int_0^\infty e\left(\frac{2\sqrt{rx}}{p}\right)J_{k-1}\left(\frac{4\pi\sqrt{nx}}{p}\right)U\left(\frac{x}{X}\right)\mathrm{d}x\\
			&= \delta ({r\equiv n  (\mod p)}) \cdot \delta \big(|r-n| < X^{ \sepsilon} p {\textstyle \sqrt {N/X}} \big) \cdot \text{``some\, factor''}+\text{``error"}\\
			&=\delta ({r=n}) \cdot \text{``some\, factor''}+\text{``error"},
	\end{split}\end{equation*}
	provided that $N < X^{1-\sepsilon}$ and $p^2 < N X$. 
	This is made  explicit in Lemma \ref{lem: Bessel-delta}. The merit of this Bessel $\delta$-identity is that it arises naturally from the \Voronoi summation formula, for  the Bessel integral may be interpreted as the Hankel transform of  $ e\left({2\sqrt{rx}} / {p}\right) U (x/X) $. 
	
	As explained in \S \ref{sec:Weber-Hankel}, there is a vague but interesting connection between the Bessel integral above and the formula
	\begin{align*}
		\int_0^{\infty}  J_{k-1} ( 4 \pi a \hskip -1pt \sqrt x)  J_{k-1} ( 4 \pi b \hskip -1pt  \sqrt x) \shskip \nd x = \frac {   \delta (a-b) } { 8 \pi^2 \hskip -1pt b },
	\end{align*}
	where $\delta (a-b)$ is now the Dirac $\delta$-distribution. Thus the use of $\delta$ is justified from a different perspective.

	\subsection*{Main results}


	\begin{thm}\label{main-theorem}
		Let $\vepsilon  > 0$ be an arbitrarily small constant.	
		Let $N, T, \varDelta > 1$ be parameters such that
		\begin{align}\label{1eq: condition on Delta}
			N^{\sepsilon} \varDelta \leqslant T .
		\end{align}
		Let $V (x) \in C_c^{\infty} (0, \infty) $ be a smooth function with support in $[1, 2]$. Assume that its total variation $\mathrm{Var} (V) \Lt 1$ and that $V^{(j)} (x) \Lt_{j} \varDelta^j$  for $j \geqslant 0$. For $\gamma$ real, and $ \phi (x) \in C^{\infty} (1/2, 5/2) $ satisfying   $ |\phi'' (x)| \Gt 1 $ and  $  \phi^{(j)} (x) \Lt_j 1 $ for $j \geqslant 1$, define $f (x) = T \phi (x/ N) + \gamma x$. Let $g \in S^{\star}_k (M, \xi)$ and $ \lambdaup_g(n) $ be its Fourier coefficients.  Then
		\begin{equation}\label{1eq: main bound}
			\sum_{n=1}^\infty \lambdaup_g(n) e ( f(n) ) V\left(\frac{n}{N}\right) \Lt  T^{1/3} N^{  1 / 2  +\sepsilon} +  \frac { N^{  1 + \sepsilon} }  { T^{  1/6}} ,
		\end{equation}
		with the implied constant depending only on $g$,  $\phi$ and $\vepsilon$. 
	\end{thm}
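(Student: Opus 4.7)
The plan is to use the Bessel $\delta$-identity of Lemma~\ref{lem: Bessel-delta} to decouple the Fourier coefficient $\lambdaup_g(n)$ from the oscillatory phase $e(f(n))$. I would fix auxiliary parameters $X$ and $P$ with $N < X^{1-\sepsilon}$ and $P^2 < NX$ (to be optimized at the end), and insert the identity with prime $p\sasymp P$ to detect $n=r$ for a second integer $r\sasymp N$. This rewrites
\[
S(N) \,\approx\, \sum_{p\sim P}\frac{1}{p}\sum_{a\,(\mod p)}\int_0^\infty U\!\left(\tfrac{x}{X}\right)\Bigl(\sum_n \lambdaup_g(n) e\!\left(\tfrac{an}{p}\right) J_{k-1}\!\left(\tfrac{4\pi\sqrt{nx}}{p}\right)\Bigr)\Bigl(\sum_r V\!\left(\tfrac{r}{N}\right) e(f(r)) e\!\left(-\tfrac{ar}{p}\right) e\!\left(\tfrac{2\sqrt{rx}}{p}\right)\Bigr)\,\nd x,
\]
plus an error term that will feed into the second summand of the bound. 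The entire argument then comes down to treating these two shorter sums separately.

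Next, I would apply the \Voronoi summation formula at level $M$ and nebentypus $\xi$ to the inner $n$-sum. The decisive feature of the method is that the Hankel kernel in \Voronoi for a weight-$k$ holomorphic form is a Bessel function of order $k-1$, which exactly matches the factor $J_{k-1}(4\pi\sqrt{nx}/p)$ already inserted by the $\delta$-identity. Multiplying two Bessel functions of the same order gives, via a standard integral representation, an oscillatory integral with a \emph{single} stationary-phase analysis, in contrast to the two competing phases that appear in Jutila's Farey-fraction approach. The outcome is a dual sum over $m$ of length $\Lt M p^2/N$ weighted by $\lambdaup_g(m)$, with the $a$-sum collapsing to a Ramanujan/Gauss sum bounded by $p$. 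In parallel, I would apply Poisson summation (or direct stationary phase) to the smooth $r$-sum; since $|f''(r)|\sasymp T/N^2$ by the hypothesis $|\phi''|\Gt 1$, each non-degenerate critical point contributes an amplitude of order $\sqrt{N^2/T}$ and pins down the admissible range of the Poisson-dual variable.

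Combining both transformations and estimating trivially using $\lambdaup_g(m)\Lt m^\sepsilon$ (Deligne), one obtains a bound depending on $P$ and $X$; optimizing under $N<X^{1-\sepsilon}$ and $P^2<NX$ produces the first term $T^{1/3}N^{1/2+\sepsilon}$ when $P\sasymp T^{1/3}$, while the second term $N^{1+\sepsilon}/T^{1/6}$ arises from the error term in the Bessel $\delta$-identity together with the boundary case of the constraints. I expect the main technical obstacle to be the stationary-phase analysis \emph{uniformly} in the parameters $r,m,a,p,x$, and in particular handling the degenerate or near-boundary configurations where critical points coalesce or escape $\mathrm{supp}\,U$; the auxiliary weight $V$ with $V^{(j)}\Lt \varDelta^j$ and the condition $N^\sepsilon \varDelta \leqslant T$ are precisely designed to ensure the phase $f$ dominates any oscillation brought in by $V$, so that the usual stationary-phase machinery applies.
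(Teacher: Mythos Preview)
Your opening moves are right: insert the Bessel $\delta$-identity, average over primes $p\sim P$, and apply Vorono\"\i\ to the $n$-sum and Poisson to the $r$-sum. But the proof breaks at the sentence ``combining both transformations and estimating trivially using $\lambdaup_g(m)\Lt m^\sepsilon$ (Deligne), one obtains a bound depending on $P$ and $X$; optimizing \ldots\ produces the first term $T^{1/3}N^{1/2+\sepsilon}$''. This is not true. After Vorono\"\i\ and Poisson the structure is (schematically)
\[
S(N,X,P)\;\approx\;\frac{N^{2}}{P^{\star}(PK)^{3/2}}\sum_{n\sim MX}\overline{\lambdaup_g(n)}\sum_{p\sim P}\frac{1}{\sqrt p}\sum_{|r-\gamma p|\Lt R}e\!\Big(\!-\frac{\bar r n}{p}\Big)\CalJ(n,r,p),
\]
with $X=P^2K^2/N$, $R=PT/N$ and $\CalJ\Lt T^{-1/2}$. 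Putting absolute values inside (even with Deligne) gives
\[
S(N,X,P)\;\Lt\;P\,K^{1/2}\,T^{1/2}\,N^{\sepsilon},
\]
which for any admissible $P,K$ is worse than the trivial bound $N$; with the paper's choice $P=N/T^{1/3}$, $K=T^{2/3}$ it is $N T^{1/2+\sepsilon}$. The cancellation you need is not in the individual $n$-summands but in the \emph{inner} sum over $p$ and $r$, and trivial estimation throws it away.

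What is missing is the Cauchy--Schwarz/second-Poisson step. One applies Cauchy in $n$ (using only the Rankin--Selberg bound $\sum_{n\leqslant X}|\lambdaup_g(n)|^2\Lt X$, not Deligne), opens the square, and then applies Poisson in $n$ modulo $p_1p_2$. The resulting congruence $n\equiv \bar r_1 p_2-\bar r_2 p_1\ (\mathrm{mod}\ p_1p_2)$ turns the problem into a counting problem whose diagonal $(p_1,r_1)=(p_2,r_2)$ contributes $\sqrt{KN}$ and whose off-diagonal contributes $\sqrt{NT}/K^{1/4}$ (plus the analogous terms when $T<N$). Only after this step does the optimization $K=T^{2/3}$, $P=N/T^{1/3}$ yield $T^{1/3}N^{1/2+\sepsilon}$; note that this $P$ is not your $T^{1/3}$. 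Two smaller points: (i) the Bessel factor in the $\delta$-identity \emph{is} the Hankel kernel for weight $k$, so Vorono\"\i\ is applied in the \emph{reversed} direction and removes the Bessel function altogether (producing a dual sum of length $MX$), rather than producing a product of two Bessel functions; (ii) the averaging over $p\sim P$ is essential precisely to tame the diagonal after Cauchy, so it should not be viewed as optional.
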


	\begin{cor}\label{main-corollary}
		Let $\phi$, $f$ and $g$ be as above. Let $    N^{1+\sepsilon}/ T \leqslant H \leqslant N$.	 We have
		\begin{align}\label{1eq: main bound, 0.2}
			\sum_{N \leqslant n \shskip \leqslant N+H}\lambdaup_g(n)\,e(f(n))\Lt_{g,  \shskip \phi, \shskip \sepsilon}   T^{1/3} N^{  1 / 2  +\sepsilon} +  \frac { N^{  1 + \sepsilon} }  { T^{  1/6}} .
		\end{align} 
		As a consequence, 
		\begin{align}\label{1eq: main bound, 0.3}
			S_{f }^{\scriptscriptstyle \sharp}(N) =	\sum_{N \leqslant n \shskip \leqslant 2N}\lambdaup_g(n)\,e(f(n))\Lt_{g,  \shskip \phi, \shskip \sepsilon}   T^{1/3} N^{  1 / 2  +\sepsilon} +  \frac { N^{  1 + \sepsilon} }  { T^{  1/6}} .
		\end{align} 
	\end{cor}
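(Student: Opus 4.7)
My plan is to deduce Corollary \ref{main-corollary} from Theorem \ref{main-theorem} by a standard smooth-to-sharp argument. The key observation that makes it work is that the transition width one can afford in the smooth cutoff---namely $\eta := N^{1+\sepsilon}/T$, the finest scale resolvable by a smooth $V$ satisfying $V^{(j)} \Lt \varDelta^j$ under the constraint $N^\sepsilon \varDelta \leqslant T$---coincides with the lower bound on $H$ in the hypothesis, and is in any case strictly smaller than the error term $N^{1+\sepsilon}/T^{1/6}$ in \eqref{1eq: main bound}.

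I will first dispose of the degenerate regime $H \leqslant 2\eta$ by Cauchy--Schwarz and the Rankin--Selberg estimate \eqref{2eq: Ramanujan}: one has $\sum_{N \leqslant n \leqslant N+H} |\lambdaup_g(n)| \Lt H \Lt N^{1+\sepsilon}/T$, which is already absorbed into the right-hand side of \eqref{1eq: main bound, 0.2}. In the remaining range $H > 2\eta$, I will construct a smooth plateau $V \in C_c^{\infty} (0, \infty)$ supported in $[1, 1 + H/N] \subseteq [1, 2]$, equal to $1$ on $[1 + \eta/N, \, 1 + H/N - \eta/N]$, with $\mathrm{Var} (V) \Lt 1$ and $V^{(j)} \Lt_j (T/N^\sepsilon)^j$ for all $j \geqslant 0$. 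Setting $\varDelta = T/N^\sepsilon$ verifies the hypothesis \eqref{1eq: condition on Delta}, and an application of Theorem \ref{main-theorem} gives
\begin{align*}
	\sum_n \lambdaup_g (n) e (f (n)) V (n/N) \Lt T^{1/3} N^{1/2 + \sepsilon} + N^{1+\sepsilon}/T^{1/6}.
\end{align*}
The difference between this smooth sum and the sharp sum $\sum_{N \leqslant n \leqslant N+H} \lambdaup_g (n) e (f (n))$ is supported in two transition intervals of length $\eta$ near the endpoints of $[N, N+H]$, on which $|V(n/N)| \leqslant 1$; a final appeal to \eqref{2eq: Ramanujan} bounds its contribution by $\Lt \eta = N^{1+\sepsilon}/T \leqslant N^{1+\sepsilon}/T^{1/6}$. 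This yields \eqref{1eq: main bound, 0.2}.

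For \eqref{1eq: main bound, 0.3} I will simply take $H = N$: the hypothesis $H \geqslant N^{1+\sepsilon}/T$ then reads $T \geqslant N^\sepsilon$, and in the contrary regime $T < N^\sepsilon$ the target bound $N^{1+\sepsilon}/T^{1/6}$ already exceeds $N$, so by \eqref{2eq: Ramanujan} the claim is trivial. No genuine obstacle arises; the only substantive point is to notice that the minimal transition scale $\eta$ dictated by the $\varDelta$-constraint of Theorem \ref{main-theorem} is already dominated by the error term $N^{1+\sepsilon}/T^{1/6}$ of that theorem, so the smooth-to-sharp loss is free.
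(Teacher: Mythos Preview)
Your approach is the same as the paper's: build a smooth plateau $V$ supported in $[1,1+H/N]$ with transition width $N/\varDelta$ for $\varDelta = T/N^{\sepsilon}$, apply Theorem \ref{main-theorem}, and bound the boundary contribution separately. There is, however, a slip in your short-interval estimates. From Cauchy--Schwarz and \eqref{2eq: Ramanujan} one only gets
\begin{align*}
\sum_{N \leqslant n \leqslant N+H} |\lambdaup_g(n)| \leqslant H^{1/2} \Bigg( \sum_{n \leqslant 2N} |\lambdaup_g(n)|^2 \Bigg)^{1/2} \Lt (HN)^{1/2},
\end{align*}
not $\Lt H$; the Rankin--Selberg bound is an average over $[1,N]$ and does not localize to short intervals. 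The same issue recurs for your transition intervals of length $\eta$, where you claim $\Lt \eta$ citing \eqref{2eq: Ramanujan}. The paper handles this step by invoking Deligne's bound \eqref{2eq: Ramanujan, 0} instead, and explicitly flags it as the only place where Deligne is used.

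Your argument is easily repaired either way. If you replace \eqref{2eq: Ramanujan} by \eqref{2eq: Ramanujan, 0} you recover the paper's proof verbatim. Alternatively, if you keep Cauchy--Schwarz and \eqref{2eq: Ramanujan}, the correct bound $(\eta N)^{1/2} \Lt N^{1+\sepsilon}/T^{1/2}$ is still absorbed by $N^{1+\sepsilon}/T^{1/6}$, so in fact Deligne can be dispensed with here---a small refinement over the paper's argument.
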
	
	
	Jutila's estimate for $ S_{f }^{\scriptscriptstyle \sharp}(N) $, say, for modular forms $g$ of level $M=1$ and for phase functions $f (x) = T \phi (x/N)$ (see  \cite[\S 10]{Huxley}) is as follows,
	\begin{align}
		S_{f }^{\scriptscriptstyle \sharp}(N) \Lt_{g,  \shskip \phi, \shskip \sepsilon}   T^{1/3} N^{  1 / 2  +\sepsilon},
	\end{align}
	provided that $ N^{3/4} < T < N^{3/2} $.
	
	Corollary \ref{main-corollary} may be regarded as a generalization of Theorem 4.6 of Jutila \cite{Jut87} in several aspects. First of all, the modular form $g $ here is of arbitrary level and nebentypus. Secondly, the estimate in \eqref{1eq: main bound, 0.3} is non-trivial as long as $N^{\sepsilon} < T < N^{3/2-\sepsilon}$, while it is assumed in \cite{Jut87} that $ N^{3/4} < T < N^{3/2} $. Note that our estimate is weaker than Jutila's when $ N^{3/4} < T < N $. Nevertheless, we are usually more concerned with the case when $ N^{1-\sepsilon} < T < N^{3/2-\sepsilon} $, for example, in the subconvexity problem; our estimate is the same as Jutila's in this case. Thirdly, our phase function $f (x)$ contains an additional linear term $\gamma x$. 
	
	Note that we shall be content with the averaged Ramanujan conjecture \eqref{2eq: Ramanujan} in the proof of  Theorem \ref{main-theorem}, while we shall need the Ramanujan conjecture (Deligne's bound) \eqref{2eq: Ramanujan, 0} only for the deduction of Corollary \ref{main-corollary}.
	
	For ease of exposition, only holomorphic modular forms are considered here, but our approach also works for Maass forms with some efforts. 
	
	\subsection*{Examples} 
	A typical and simple choice of $\phi (x)$ is the power function $ \pm x^{\shskip\beta}$ so that $ f (x) = \valpha x^{\shskip\beta} + \gamma x$ ($T = |\valpha| N^{\beta}$). Let 
	\begin{align*}
		S_{\salpha, \shskip \beta, \shskip \gamma}^{\scriptscriptstyle \sharp} (N) = \sum_{n \shskip \leqslant N}\lambdaup_g(n)\,e(\valpha n^{\shskip \beta} + \gamma n). 
	\end{align*}
	
	For modular forms $g$ of level $M=1$, there are abundant works on this type of exponential sums in the literature (usually, with $\gamma = 0$). 
	
	As alluded to above,	the first non-trivial  bound  for $S_{\salpha, \shskip \beta, \shskip 0}^{\scriptscriptstyle \sharp} (N)$ was obtained by Jutila (see  \cite[Theorem 4.6]{Jut87}) for the range $3/4 < \beta < 3/2$, $\beta \neq 1$, as follows,
	\begin{align}\label{0eq: Jutila}
		S_{\salpha, \shskip \beta, \shskip 0}^{\scriptscriptstyle \sharp}(N)\Lt_{g, \shskip \salpha, \shskip \beta, \shskip \sepsilon} N^{\frac 1 2 + \frac {\beta} 3 +\sepsilon } .
	\end{align}

	When $\beta=1/2$, $\valpha= - 2\sqrt{q}$ for integer $q > 0$, and $\gamma = 0$, it was first shown by Iwaniec, Luo  and Sarnak \cite[(C.17)]{ILS-LLZ} that the smoothed sum $$\sum_{n=1}^\infty \lambdaup_g(n) e(-2\sqrt{qn} ) V\left(\frac{n}{N}\right)$$ has a main term of size $N^{3/4}$.
	
	The first non-trivial bound towards $S_{\salpha, \shskip \beta, \shskip \gamma}^{\scriptscriptstyle \sharp} (N)$ for {\it all}  $0 < \beta < 1$ is due to X. Ren and Y. Ye \cite{RenYe}, who refined the aforementioned result of  Iwaniec, Luo and Sarnak for $\beta = 1/2$, and proved for $\beta \neq 1/2$ that
	\begin{align}\label{0eq: Ren-Ye}
		S_{\salpha, \shskip \beta, \shskip 0}^{\scriptscriptstyle \sharp}(N)\Lt_{g, \shskip \salpha, \shskip \beta, \shskip \sepsilon} N^{\beta +\sepsilon }+N^{\frac 1 2-\frac \beta 4+\sepsilon}. 
	\end{align}
	This was improved into $N^{1/3+\sepsilon}$ in \cite{Sun-Wu} for $0 < \beta < 1/2$ (the Maass form case is also considered there).  Note that Jutila's estimate \eqref{0eq: Jutila} is stronger than \eqref{0eq: Ren-Ye} for $3/4 < \beta < 1$. 
	
	It should be mentioned that Q. Sun \cite{Sun-1} obtained the bound $N^{1- \frac {\beta} 2 + \sepsilon}$ for $ S_{\salpha, \shskip \beta, \shskip \gamma}^{\scriptscriptstyle \sharp}(N)  $ in the range $ 0 < \beta \leqslant 1/2$.  Her bound was improved into $N^{\frac 1 2 + \frac {\beta} 2+ \sepsilon}$ by Godber \cite{Additive-Godber} (for $0<\beta< 1$). For $\gamma = 0$, these bounds are both weaker than \eqref{0eq: Ren-Ye}.
	
	There is also a very distinguishable result---Pitt's  {\it uniform} estimate for $S_{\salpha, \shskip 2, \shskip \gamma}^{\scriptscriptstyle \sharp} (N)$ with quadratic phase in \cite{Pitt-quadratic},
	\begin{align}\label{0eq: quadratic}
		S_{\salpha, \shskip 2, \shskip \gamma}^{\scriptscriptstyle \sharp} (N) \Lt_{g, \shskip \sepsilon} N^{\frac {15}  {16} + \sepsilon},
	\end{align}
	where the implied constant depends only on $g$ and $\vepsilon$.  The exponent $15/16$ was later improved into $7/8$ by K. Liu and X. Ren \cite{Ren-Liu-quadratic}.
	
	More generally, one can also consider analogous exponential sums of Fourier coefficients of  Maass cusp forms for $\GL_m$, $m\geqslant 3$.   Some similar results for $\GL_3$ and $\GL_m$ were obtained later by X. Ren and Y. Ye in \cite{RenYe2,RenYe3}. Recently, Kumar et al. \cite{Kumar-M-S} had some improvement over the results in \cite{RenYe2}, by using the $\delta$-symbol method of Duke--Friedlander--Iwaniec \cite{DFI-1} together with a conductor-lowering trick which was first introduced by Munshi \cite{Munshi-Circle-III}.

	A direct consequence of Corollary \ref{main-corollary} is the following estimates for $  S_{\salpha, \shskip \beta, \shskip \gamma}^{\scriptscriptstyle \sharp} (N)$ for modular forms $g \in  S^{\star}_k (M, \xi)$. 
	
	\begin{cor}
		Let $g \in S^{\star}_k (M, \xi)$ and $ \lambdaup_g(n) $ be its Fourier coefficients.		For real $\valpha, \gamma$ and $\beta$ with $\valpha \neq 0$, $\beta \neq 1$, we have 
		\begin{align}\label{1eq: main bound, 2}
			\sum_{n \shskip \leqslant N}\lambdaup_g(n)\,e(\valpha n^{\shskip \beta} + \gamma n)\Lt_{g,  \shskip \beta, \shskip \sepsilon}   |\valpha|^{\frac 1 3} N^{\frac 1 2+ \frac \beta 3 +\sepsilon} +     {|\valpha|^{- \frac 16} } {N^{1 -\frac {\beta} 6 + \sepsilon}  } .
		\end{align} 
		In particular, 
		\begin{align}\label{1eq: main bound, 3}
			\sum_{n \shskip \leqslant N}\lambdaup_g(n)\,e(\valpha n^{\shskip \beta}  + \gamma n)\Lt_{g, \shskip \salpha, \shskip \beta, \shskip \sepsilon}  N^{\frac 1 2+ \frac \beta 3 +\sepsilon} +  {N^{1-\frac {\beta} 6 + \sepsilon}  } .
		\end{align} 
	\end{cor}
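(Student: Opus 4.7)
Plan: The statement reduces to Corollary \ref{main-corollary} by a dyadic decomposition. Taking $\phi(x) = \pm x^\beta$ with the sign chosen according to $\valpha$ and $T = |\valpha| N^\beta$, one has $T\phi(x/N) + \gamma x = \valpha x^\beta + \gamma x$. For $\beta \neq 0, 1$, the function $\phi$ satisfies the hypotheses $|\phi''(x)| \Gt 1$ and $|\phi^{(j)}(x)| \Lt_j 1$ of Theorem \ref{main-theorem} on $(1/2, 5/2)$, since $\phi''(x) = \pm \beta(\beta-1) x^{\beta-2}$ and the higher derivatives are similarly power-law. The degenerate case $\beta = 0$ is immediate from Wilton's bound \eqref{0eq: holomorphic form} applied to $e(\gamma n)$, which yields $\Lt N^{1/2+\sepsilon}$, well below the right-hand side of \eqref{1eq: main bound, 2}.

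Next, I would decompose $\sum_{n \leqslant N}$ dyadically as $\sum_{j \geqslant 0} S_j$ with $S_j = \sum_{N_j \leqslant n \leqslant 2N_j} \lambdaup_g(n)\, e(\valpha n^\beta + \gamma n)$ and $N_j = N/2^{j+1}$. On each piece with $T_j := |\valpha| N_j^\beta \geqslant N_j^{\sepsilon}$, Corollary \ref{main-corollary} applied with $H = N_j$ yields
$$S_j \Lt T_j^{1/3} N_j^{1/2+\sepsilon} + \frac{N_j^{1+\sepsilon}}{T_j^{1/6}} = |\valpha|^{1/3} N_j^{1/2+\beta/3+\sepsilon} + |\valpha|^{-1/6} N_j^{1-\beta/6+\sepsilon}.$$
Summing over the $O(\log N)$ dyadic pieces, each of the two resulting geometric series in $N_j$ is dominated by the $j = 0$ term (i.e., evaluating at $N_j = N$) in the interesting range $-3/2 < \beta < 6$, where both exponents $1/2 + \beta/3$ and $1 - \beta/6$ are strictly positive.

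For dyadic pieces that violate $T_j \geqslant N_j^{\sepsilon}$, Deligne's bound \eqref{2eq: Ramanujan, 0} gives the trivial estimate $S_j \Lt N_j^{1+\sepsilon}$; in this regime a direct check shows $N_j^{1+\sepsilon}$ is absorbed into $|\valpha|^{-1/6} N_j^{1-\beta/6+\sepsilon}$ after a harmless enlargement of $\sepsilon$. Parameter regimes for which the right-hand side of \eqref{1eq: main bound, 2} already exceeds the trivial bound $N^{1+\sepsilon}$ (in particular, $\beta$ outside $(-3/2, 6)$) need no further argument. The refinement \eqref{1eq: main bound, 3} then follows by absorbing $|\valpha|$ into the implied constant. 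The only real obstacle is the routine bookkeeping across these dyadic ranges; all the analytic substance rests in Corollary \ref{main-corollary}.
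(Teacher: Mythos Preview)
Your argument is exactly what the paper intends: the corollary is stated there as a ``direct consequence of Corollary \ref{main-corollary}'' with no further proof, and your dyadic reduction with $\phi(x)=\pm x^{\shskip\beta}$, $T=|\valpha|N^{\beta}$ is the natural way to carry this out. For $-3/2<\beta<6$ --- hence in particular for the entire range $0<\beta<3/2$ in which the bound is nontrivial --- your argument is complete and correct.

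One small slip worth flagging: your parenthetical claim that for $\beta$ outside $(-3/2,6)$ the right-hand side of \eqref{1eq: main bound, 2} automatically exceeds the trivial bound $N^{1+\sepsilon}$ is not true as stated. For example, take $\beta=12$ and $|\valpha|=N^{-11}$: then $T=|\valpha|N^{\beta}=N$, and both terms on the right of \eqref{1eq: main bound, 2} are $\asymp N^{5/6}$, well below $N$. Meanwhile the dyadic pieces with $N_j$ near $N^{11/12}$ (where $T_j\approx 1$) contribute $N_j/T_j^{1/6}\approx N^{11/12}$ to your sum of second terms, so the dyadic argument alone does not close. This extreme range is of no practical interest --- the paper itself only discusses $0<\beta<3/2$ --- and the discrepancy arguably reflects an imprecision in the corollary's stated generality rather than a defect in your method; but the sentence as written is inaccurate and should be softened or dropped.
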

	
	Note that the estimate  \eqref{1eq: main bound, 3} is non-trivial for $0 < \beta < 3/2 $. Though weaker for $3/4 < \beta < 1$, it is the same as Jutila's estimate \eqref{0eq: Jutila} for $1 < \beta < 3/2$. At any rate, our estimate is an extension of Jutila's result (for $1 < \beta < 3/2$, literally) to modular forms of general level.
	
	Also note that \eqref{1eq: main bound, 3} is better than Ren and Ye's estimate \eqref{0eq: Ren-Ye}  as long as $\beta> 6/7$. 
	However, our bound   is worse than theirs for $\beta < 6/7$. This is due to the nature and the limitation of our Bessel $\delta$-method or any $\delta$- or circle method. For if $ \beta  $ is relatively small then $e(\valpha n^{\shskip \beta})$ is not quite oscillatory, and  it would not benefit much to separate the oscillations of $e(\valpha n^{\shskip \beta})$ and $\lambdaup_g (n)$ by the $\delta$-method.  The approach in \cite{RenYe} works far better in this situation, where the \Voronoi summation (with modulus $1$) is applied directly,  followed by stationary phase arguments. 
	
	In \cite{Pitt-quadratic}, the $\delta$-method of Duke--Friedlander--Iwaniec, along with Diophantine approximation,  is used to prove the estimate in \eqref{0eq: quadratic} in the quadratic case $\beta = 2$. However, this approach does not work with fractional $\beta$.

	\subsection*{Application: Weyl-type subconvex bound in the $t$-aspect}
	
	For $g \in S^{\star}_k (M, \xi)$ with Fourier coefficients
	$\lambdaup_g(n)$, the associated $L$-function is given by
	\begin{equation*}
		\begin{split}
			L(s,g) =\sum_{n=1}^{\infty}\frac{\lambdaup _g(n)}{n^s}, \hskip 15pt \Re\, s >1.
	\end{split}\end{equation*}
	This $L$-series has an analytic continuation  to the whole complex plane. The Phragm\'en--Lindel\"of principle implies the $t$-aspect convex bound $$ L\left(1/2+it,g\right)\Lt_{g, \shskip \sepsilon} (1+|t|)^{1/2+\sepsilon}$$ for any $\vepsilon>0$. Any improvement on the exponent on the right-hand side of the inequality is referred to as a subconvex  bound, and in general it requires significant  amount of work to achieve it.
	
	When $M =1$, the following Weyl-type subconvex bound was first proven by Good \cite{Good},
	$$L(1/2+it,g)\Lt_{g, \shskip \sepsilon} (1+|t|)^{1/3+\sepsilon}, $$
	by appealing to the spectral theory of automorphic functions. Later, the same bound was obtained by Jutila using his method developped in \cite{Jut87}. See \cite{Meu87,Jutila97} for the extension of these methods to the Maass-form case.  
	
	There has been much progress lately, due to new methods, especially variants of the $\delta$-symbol or circle method become available. For example, Munshi \cite{Munshi-Circle-III} solved the $t$-aspect subconvexity problem for  $L$-functions on $\rm GL_3$ by adopting Kloosterman's version of the circle method.  
	He also invented  a $\rm GL_2$ $\delta$-method and used it in a series of papers \cite{Munshi-Circle-IV}--\cite{Munshi17.5} for various subconvexity problems. These methods were applied in \cite{Aggarwal-Singh2017,ARSS} to obtain the Weyl bound in the $\GL_2$ setting.
	In a recent preprint \cite{Munshi2018-6}, Munshi was even able to break the long standing Weyl-bound barrier by introducing extra variants into the $\rm GL_2$ $\delta$-method approach. 
	
	Recently, there are Weyl-type subconvexity results for cusp forms of general level by Booker et al.  \cite{BMN} and the first-named author \cite{Aggarwal}. Booker et al. \cite{BMN} generalized Huxley's treatment of Jutila's method by using a \Voronoi  formula with {\it arbitrary} additive twists to obtain their result. On the other hand, Aggarwal \cite{Aggarwal} used a simple $\delta$-symbol method and followed Munshi's approach \cite{Munshi-Circle-III}. This treatment allowed him to use the \Voronoi formula of Kowalski--Michel--VanderKam to get the Weyl-type bound, along with an explicit dependence on the level of the cusp form.

	By applying Theorem \ref{main-theorem}, with $\phi (x) = - \log x$, we shall derive in \S \ref{sec: Weyl} the Weyl subconvex bound for $ g \in S^{\star}_k (M, \xi)$.
	
	\begin{thm}\label{main-theorem-Weyl}
		Let $g \in S^{\star}_k (M, \xi)$. Then
		\begin{equation*}
			L\left(1/2+it,g\right)\Lt  (1+|t|)^{1/3+\sepsilon}.
		\end{equation*}
		with the implied constant depending only on $g$ and $\vepsilon$. 
	\end{thm}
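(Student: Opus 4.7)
The plan is to combine a standard approximate functional equation with Theorem~\ref{main-theorem} applied dyadically. A standard approximate functional equation for $L(1/2+it,g)$, followed by a smooth dyadic partition of unity, reduces the problem to bounding the dyadic sums
\begin{align*}
S(N) = \sum_{n=1}^\infty \lambdaup_g(n)\, n^{-it} V(n/N)
\end{align*}
for a fixed weight $V \in C_c^\infty(0,\infty)$ supported in $[1,2]$ and $N$ ranging over dyadic values with $N \leqslant \sqrt{M}\shskip(1+|t|)^{1+\sepsilon}$. One obtains
\begin{align*}
L(1/2+it, g) \Lt (1+|t|)^{\sepsilon} \sup_{N}\frac{|S(N)|}{\sqrt{N}}.
\end{align*}

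Next I would write $n^{-it} = e(f(n))\cdot e(-T\log N)$ with $T = |t|/(2\pi)$, $\phi(x) = -\log x$, $\gamma = 0$, and $f(x) = T\phi(x/N) + \gamma x$ (assume $t>0$; the constant phase is harmless). The hypotheses of Theorem~\ref{main-theorem} are immediate: on $[1/2, 5/2]$ one has $\phi''(x) = 1/x^2 \Gt 1$ and $\phi^{(j)}(x)\Lt_j 1$ for all $j\geqslant 1$. Since $V$ is fixed, $\varDelta = O(1)$, and the condition $N^{\sepsilon}\varDelta \leqslant T$ is satisfied uniformly throughout the range after shrinking the auxiliary $\sepsilon$ relative to the $\sepsilon$ in the approximate functional equation.

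Applying Theorem~\ref{main-theorem} then gives
\begin{align*}
\frac{|S(N)|}{\sqrt{N}} \Lt T^{1/3} N^{\sepsilon} + \frac{N^{1/2+\sepsilon}}{T^{1/6}}.
\end{align*}
In the range $T^{2/3}\leqslant N \leqslant \sqrt{M}\shskip T^{1+\sepsilon}$ the first term is trivially $\Lt T^{1/3+\sepsilon}$, and the second is bounded by $M^{1/4}T^{1/3+\sepsilon}$. For the small-$N$ regime $N\leqslant T^{2/3}$ where Theorem~\ref{main-theorem} is no stronger than trivial, the Cauchy--Schwarz estimate combined with \eqref{2eq: Ramanujan} gives $|S(N)|/\sqrt{N}\Lt \sqrt{N} \leqslant T^{1/3}$, which already suffices. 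Taking the supremum over $N$ yields $L(1/2+it,g) \Lt_{g,\sepsilon} (1+|t|)^{1/3+\sepsilon}$, as claimed.

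The proof is largely mechanical once Theorem~\ref{main-theorem} is granted; the only real point to appreciate is that the two terms of \eqref{1eq: main bound} balance at the transition length $N \asymp T \asymp |t|$, which is precisely the harmonic length of the Dirichlet polynomial for $L(1/2+it,g)$. This balance is what forces the Weyl exponent $1/3$ and ensures that a single bound covers the full dyadic range arising from the approximate functional equation. The remaining task is bookkeeping: arranging the smooth dyadic partition so that the derivatives of $V$ are bounded independently of $t$ (i.e.\ $\varDelta = O(1)$), and tracking the $M$-dependence implicit in the constant $\Lt_g$.
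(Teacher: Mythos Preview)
Your proposal is correct and follows essentially the same route as the paper: reduce via the approximate functional equation and a smooth dyadic partition to the sums $S(N)$ with $N \Lt t^{1+\sepsilon}$, handle $N \leqslant t^{2/3}$ by the trivial Rankin--Selberg bound, and for larger $N$ apply Theorem~\ref{main-theorem} with $\phi(x)=-\log x$, $T=t/2\pi$, $\gamma=0$, $\varDelta=O(1)$. The paper's proof in \S\ref{sec: Weyl} is literally this argument, so there is nothing to add.
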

	
	This work is of the same theme as  \cite{Aggarwal}, but it is technically simpler here, for our Bessel $\delta$-method is more intimate to the \Voronoi summation formula than his trivial $\delta$-method.
	Moreover, our argument by the Bessel $\delta$-method is very short compared to that by the Jutila method generalized in \cite{BMN}. 
	
	A motivation of our work is from \cite{AHLS}, in which, together with Q. Sun, the first three named authors investigated subconvex bounds for $L(1/2, g\otimes\vchi)$, where $\vchi$ is a primitive Dirichlet character of prime conductor $q$. They were able to use a `trivial' delta method to give a simpler proof for the Burgess bound  in the $q$-aspect, 
	$$L(1/2, g\otimes\vchi)\Lt_{g, \shskip \sepsilon} q^{3/8+\sepsilon}. $$ 
	The Bessel $\delta$-method is an outcome of our search for a similar simple approach to strong subconvex bounds in the $t$-aspect. 
	It seems natural that the argument of this paper can be combined with the approach in \cite{AHLS} to obtain a uniform subconvexity bound for $L(1/2+it,g\otimes \vchi)$ in both the $q$ and $t$ aspects.
	
	\subsection*{Notation} Let $p$ always stand for prime. The notation $n \sim N$ or $p \sim P$ is used for integers or primes in the dyadic segment $[N, 2N]$ or $[P, 2P]$, respectively.


	\section{\texorpdfstring{The \Voronoi summation}{The Voronoi summation}}
	

	Let $S^{\star}_k (M, \xi)$ denote the set of primitive newforms of level $M$, weight $k$ and nebentypus $\xi$. We have necessarily $\xi (-1) = (-1)^k$.    The term ``primitive" means that the form is Hecke-normalized so that its  Fourier coefficients and Hecke eigenvalues coincide.

	The following \Voronoi summation formula is a special case of \cite[Theorem A.4]{KMV}. Note that $g_M = \widebar {g} \in S^{\star}_k (M, \widebar \xi)$ in their notation (see  \cite[Proposition A.1]{KMV}). 
	
	\begin{lem}[The \Voronoi Summation Formula]\label{lem: Voronoi}
		Let $g$ be a primitive holomorphic newform in $S^{\star}_k (M, \xi)$. Let $a, \overline{a}, c$ be integers such that $c \geqslant 1$, $(a, c) = 1$, $a \overline{a} \equiv 1 (\mod c)$ and $(c, M) = 1$. Let $F (x) \in C_c^{\infty} (0, \infty)$. Then there exists a complex number $\eta_g $ of modulus $1$ {\rm(}the Atkin--Lehner pseudo-eigenvalue of $g${\rm)} such that
		\begin{equation}\label{1eq: Voronoi}
			\begin{split}
				\sum_{n=1}^{\infty}  \lambdaup_g(n)  e\left(\frac{an}{c}\right) F\left( {n} \right) =  \frac {\eta_g \xi (-c)} {c \sqrt{M}} \sum_{n=1}^{\infty} \overline { \lambdaup_{  g } (n) } e\left(- \frac{\overline{a}n}{c}\right) \check{F} \lp \frac{  n }{c^2 M} \rp .
			\end{split}
		\end{equation} 
		where $\check{F} (y)$ is the Hankel transform of $F (x)$ defined by 
		\begin{align}\label{2eq: Hankel transform}
			\check{F} (y) = 2\pi i^k\int_0^\infty F(x) J_{k-1}\left( {4\pi\sqrt{xy}} \right) \hskip -1pt \mathrm{d}x
		\end{align}
	\end{lem}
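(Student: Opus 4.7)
The plan is a classical Mellin--Barnes argument based on the functional equation of the additively twisted $L$-function. Since $F\in C_c^\infty(0,\infty)$, its Mellin transform $\tilde F(s)=\int_0^\infty F(x) x^{s-1}\,dx$ is entire and of rapid decay on vertical strips. Mellin inversion together with Fubini (justified by \eqref{2eq: Ramanujan}) transforms the left side of \eqref{1eq: Voronoi} into
$$\frac{1}{2\pi i}\int_{(\sigma)} \tilde F(s)\, L(s, g, a/c)\, ds, \qquad L(s, g, a/c) := \sum_{n\geqslant 1} \lambdaup_g(n)\, e(an/c)\, n^{-s},$$
with any $\sigma > 1$.

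The central ingredient is the functional equation
$$\Lambda(s, g, a/c) = i^k\, \eta_g\, \xi(-c)\, \Lambda(1-s, \overline g, -\overline a / c),$$
where $\Lambda(s, g, a/c) := (c\sqrt M / 2\pi)^{s}\, \Gamma\big(s + \tfrac{k-1}{2}\big)\, L(s, g, a/c)$. To derive it, I would represent $\Lambda$ as the Mellin integral $\int_0^\infty g(a/c + iy/(c\sqrt M))\, y^{s + (k-1)/2 - 1}\, dy$ via term-by-term integration of the Fourier series, split at $y=1$, and on the lower piece apply the modular transformation of $g$ under a matrix $\gamma\in\mathrm{GL}_2^+(\BQ)$ that exchanges the cusps $a/c$ and $-\overline a/c$. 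The hypothesis $(c,M)=1$ is crucial here: it permits the construction of such $\gamma$ by composing the Atkin--Lehner involution $W_M$ (for which $g|_k W_M = \eta_g\, \overline g$) with integer translations, and the nebentypus $\xi$ acting on the resulting $\Gamma_0(M)$-factor produces the constant $\xi(-c)$.

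With the functional equation in hand, shift the contour from $\Re s = \sigma$ to $\Re s = -\sigma$, crossing no poles (the twisted $L$-function is entire and $\tilde F$ is entire). Substituting the functional equation, relabeling $s \mapsto 1-s$, expanding the Dirichlet series of $L(s, \overline g, -\overline a / c)$, and interchanging sum and integral, the right side becomes
$$\frac{\eta_g\, \xi(-c)}{c\sqrt M}\, \sum_{n\geqslant 1} \overline{\lambdaup_g(n)}\, e(-\overline a n / c)\cdot K\big(n/(c^2 M)\big),$$
where $K(y)$ is an inverse Mellin integral of $\tilde F(1-s)$ against a ratio of $\Gamma$-factors and a power of $(2\pi)^2 y$. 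By the classical Mellin--Barnes representation of $J_{k-1}(4\pi\sqrt{xy})$ and Fubini, $K(y)$ equals the Hankel transform $\check F(y)$ from \eqref{2eq: Hankel transform}, matching \eqref{1eq: Voronoi} exactly.

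The main obstacle is pinning down the exact constant $i^k\,\eta_g\,\xi(-c)$ in the functional equation: one must choose the correct coset representative in $\Gamma_0(M)\backslash\mathrm{GL}_2^+(\BQ)$ connecting the cusps $a/c$ and $-\overline a/c$, and carefully track how the Atkin--Lehner pseudo-eigenvalue, the nebentypus character, and the $i^k$ from the Bessel kernel combine. These bookkeeping details are carried out in full in \cite[Appendix A]{KMV}, of which the present lemma is a direct specialization.
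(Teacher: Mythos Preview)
The paper does not give its own proof of this lemma; it simply records it as a special case of \cite[Theorem A.4]{KMV}, noting only that $g_M=\overline g$ in their notation. Your proposal goes further by sketching the standard Mellin--Barnes derivation via the functional equation of the additively twisted $L$-function, and then likewise defers to \cite{KMV} for the bookkeeping of constants. The outline is correct and is indeed the approach underlying the cited reference, so there is no discrepancy to discuss.
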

	
	

	The \Voronoi summation formula in \cite[Theorem A.4]{KMV} is more general, where it is only required that $((c, M), M/(c,M) ) = 1$. 
	However, in our setting $c = p$ will be a large prime while $M$ is fixed, so our condition $(c, M) = 1$ in Lemma \ref{lem: Voronoi} is justified. For comparison, we remark that, Jutila's method requires the $a/c$ to be {\it every} fraction, so this \Voronoi works only if $M$ is square-free; thus in  \cite{BMN}, they need a more general \Voronoi even without the restriction $((c, M), M/(c,M) ) = 1$.

	\section{A Bessel $\delta$-method}

	\subsection{Basics of Bessel functions}
	
	For complex $\varnu$, let $J_{\varnu} (z)$ 
	be the Bessel function of the first kind (\cite{Watson}), defined by the series
	\begin{equation}\label{2def: series expansion of J}
		J_{\varnu} (z) = \sum_{n=0}^\infty \frac {(-)^n \lp  z/2 \rp^{\varnu+2n } } {n! \Gamma (\varnu + n + 1) }.
	\end{equation}
	Moreover, we may write (see  \cite[\S 16.12, 16.3, 17.5]{Whittaker-Watson} or \cite[\S 7.2]{Watson})
	\begin{equation} \label{2eq: Bessel function and Whittaker function}
		J_{\varnu}(x) = \frac {1} {\sqrt {2 \pi x}} \left( e^{i x} W_{\varnu,\shskip +}(x) + e^{- i x} W_{ \varnu, \shskip -} (x) \right),
	\end{equation}
	with 
	\begin{equation}
		\label{2eq: bounds for Whittaker functions}
		x^j W_{\varnu, \shskip \pm}^{(j)} (x) \Lt_{\, \varnu, j} 1, \hskip 15pt x \Gt 1.
	\end{equation}
	
	\subsection{Asymptotic of a Bessel integral} 
	
	For a fixed (non-negative valued) bump function $U \in C_c^{\infty} (0, \infty)$, say with support in $[1,2]$, $a, b > 0$ and $X > 1$, consider the Bessel integral
	\begin{align}\label{3eq: defn I(a,b;X)}
		I_{k} (a, b; X) =  \int_0^\infty U\left( {x}/{X}\right) e ( {2  a \sqrt{  x}}  ) J_{k-1}  ( {4\pi b \sqrt{ x}}  ) \mathrm{d}x.
	\end{align}
	
	By \cite[6.699 1, 2]{G-R}, we have
	\begin{equation*}
		\begin{split}
			\int_0^\infty  e^{ i a x} J_{\varnu}(b x) x^{\shskip\mu-1} \mathrm{d}x =
			\frac{e^{\pi i (\varnu + \mu) /2 } b^\varnu}{2^{\varnu} a^{\varnu+\mu}} & \frac{\Gamma(\varnu+\mu)}{\Gamma(\varnu+1)}   F\left(\frac{\varnu+\mu}{2},\frac{\varnu+\mu+1}{2};\varnu+1; \frac{b^2}{a^2}\right)   
		\end{split}
	\end{equation*}
	for $b > a > 0$ and $ - \Re\, \varnu < \Re\, \mu <   3 / 2 $. By appealing to the Gaussian formula (see  \cite[\S 2.1]{MO-Formulas})
	\begin{equation*}
		\begin{split}
			F(\valpha,\beta;\gamma;1)=\frac{\Gamma(\gamma)\Gamma(\gamma-\valpha-\beta)}{\Gamma(\gamma-\valpha)\Gamma(\gamma-\beta)},\quad  \Re (\valpha+\beta-\gamma) < 0, \, \gamma \neq 0, -1, -2,...,
		\end{split}
	\end{equation*}
	and the duplication formula for the gamma function, we obtain
	\begin{equation}\label{3eq: J (ax) exp(iax)}
		\begin{split}
			\int_0^\infty  e^{ i a x} J_\varnu(a x) x^{\shskip\mu-1} \mathrm{d}x =
			\frac{  e^{\pi i (\varnu + \mu) /2 }   }{\sqrt \pi (2a)^{  \mu}} \frac {\Gamma (\varnu+\mu) \Gamma  (  1 / 2 - \mu  ) } {\Gamma (\varnu - \mu + 1) } , \quad  - \Re \,\varnu < \Re \,\mu < \frac 1 2 ,
		\end{split}
	\end{equation}
	after letting  $b \ra a$. Note that the limit  $b \ra a$ is legitimate because both the integral on the left and the hypergeometric series on the right are  absolutely and uniformly convergent for $ - \Re \,\varnu < \Re \,\mu <   1 / 2 $ (see  \cite[\S 2.1]{MO-Formulas}). 
	
	We first consider $ I_k(a, a; X)$ as defined in \eqref{3eq: defn I(a,b;X)}. By Mellin inversion
	\begin{equation*}
		\begin{split}
			I_k(a, a; X) = & \frac{X}{2\pi i}\int_{(\sigma)}\widetilde{U}(s) \int_0^\infty 2 \shskip e (2 a \hskip -1pt \sqrt { X } x ) J_{k-1} ( {4\pi a \hskip -1pt \sqrt{ X } x }  ) x^{1-2s} \mathrm{d}x \shskip \mathrm{d}s,
		\end{split}
	\end{equation*}
	where  $\widetilde{U}(s)$ denotes the Mellin transform of the function $U$, and $(\sigma)$ stands for the contour $\Re \, s = \sigma$ as usual. Applying \eqref{3eq: J (ax) exp(iax)} to evaluate the inner integral, we infer that 
	\begin{align*}
		I_k (a,a;X) = \frac{X}{2\pi i}\int_{(\sigma)}\widetilde{U}(s)\frac{2 i^{k-1}}{\sqrt{\pi} ( {-8\pi i a \hskip -1pt \sqrt{  X}}  )^{2-2s}} \frac{ \Gamma(k-2s+1)\Gamma(2s-3/2)} {\Gamma(k+2s-2)}\mathrm{d}s,
	\end{align*}
	for  $ {3}/{4}<  \sigma  < ( k+1)/2 $. Assume that $a^2 X > 1$. By  shifting the contour of integration to $\Re\, s = 0$, say, and collecting the residues at $s=3/4$ and $1/4$, we obtain the following asymptotic for $ I_k (a,a;X) $.
	
	\begin{lem}\label{pre-key-lemma} 
		We have
		\begin{equation}\label{3eq: I(a,a;X) asymptotic}
			\begin{split}
				I_k (a,a;X) = \frac {(1+i)i^{k-1} \widetilde{U}(3/4) X } {4 \pi  ( a^2 X)^{1/4} } + O \left( \frac {X} {(a^2  X)^{3/4 } } \right),
			\end{split}
		\end{equation}
		with the implied constant depending only on $k$ and $U$. 
	\end{lem}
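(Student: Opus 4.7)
The plan is to take the Mellin-barnes representation
\[
I_k(a,a;X) = \frac{X}{2\pi i}\int_{(\sigma)}\widetilde{U}(s)\frac{2 i^{k-1}}{\sqrt{\pi}\bigl(-8\pi i a \sqrt{X}\bigr)^{2-2s}} \frac{\Gamma(k-2s+1)\Gamma(2s-3/2)}{\Gamma(k+2s-2)}\,\mathrm{d}s
\]
already derived in the text, and push the line of integration from $\Re\,s=\sigma\in(3/4,(k+1)/2)$ leftward to $\Re\,s=0$. In this strip the only source of poles is $\Gamma(2s-3/2)$, whose simple poles sit at $s=3/4-n/2$; crossing $\Re\,s=0$ therefore picks up exactly the two residues at $s=3/4$ and $s=1/4$, in that order corresponding to the main term and the error term claimed in the lemma.

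Next I would compute the residue at $s=3/4$ explicitly. Since $\operatorname{Res}_{s=3/4}\Gamma(2s-3/2)=\tfrac12$ and the ratio $\Gamma(k-2s+1)/\Gamma(k+2s-2)$ equals $1$ at $s=3/4$, this residue reduces to
\[
\frac{i^{k-1}\widetilde{U}(3/4)}{\sqrt\pi\,\bigl(-8\pi i a\sqrt{X}\bigr)^{1/2}}.
\]
The one subtle point is choosing the branch so that $(-i)^{1/2}=e^{-i\pi/4}=(1-i)/\sqrt2$, which gives $\bigl(-8\pi i a\sqrt X\bigr)^{1/2}=2(1-i)\sqrt{\pi a}\,X^{1/4}$; rationalizing by $(1+i)/2$ and using $\sqrt a\,X^{1/4}=(a^2X)^{1/4}$, the residue multiplied by $X$ becomes precisely
\[
\frac{(1+i)i^{k-1}\widetilde{U}(3/4)\,X}{4\pi\,(a^2X)^{1/4}},
\]
matching the main term in \eqref{3eq: I(a,a;X) asymptotic}. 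The residue at $s=1/4$ is handled identically and yields a contribution of the shape $X/(a^2X)^{3/4}$, which is absorbed into the error.

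Finally I would bound the shifted integral on $\Re\,s=0$. On this line the factor $\bigl(-8\pi i a\sqrt X\bigr)^{-(2-2s)}$ has modulus $(8\pi a\sqrt X)^{-2}=(a^2X)^{-1}\cdot(8\pi)^{-2}$, and Stirling's formula gives $|\Gamma(k-2s+1)\Gamma(2s-3/2)/\Gamma(k+2s-2)|\Lt_k (1+|\Im\,s|)^{k-3/2}$, which is tamed by the rapid decay of $\widetilde U(it)$ in $t$. Hence the residual integral contributes at most $O\bigl(X/(a^2X)\bigr)$, which under the hypothesis $a^2X>1$ is dominated by the claimed error term $O\bigl(X/(a^2X)^{3/4}\bigr)$.

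The main obstacle is purely bookkeeping: tracking the branch of $(-i)^{1/2}$ consistently through the shifted contour and verifying the identity $\sqrt a\,X^{1/4}=(a^2X)^{1/4}$ so that the algebraic factors collapse into the compact form stated. Once the residue at $s=3/4$ is pinned down with the correct sign and phase, the rest of the argument is routine contour shifting combined with standard Stirling estimates.
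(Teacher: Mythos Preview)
Your proposal is correct and follows exactly the paper's approach: the paper derives the same Mellin--Barnes representation in the paragraph preceding the lemma and then simply states that one shifts the contour to $\Re\,s=0$, collecting the residues at $s=3/4$ and $s=1/4$. You have supplied the explicit residue computation and the bound on the shifted integral that the paper omits; one minor slip is the Stirling exponent on $\Re\,s=0$ (the gamma ratio there grows like $|t|^{1}e^{-\pi|t|}$ rather than $|t|^{k-3/2}$, though either way the rapid decay of $\widetilde U$ makes the integral $O\bigl(X/(a^2X)\bigr)$ as you claim).
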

	
	We now consider $ I_k (a, b; X) $ as in \eqref{3eq: defn I(a,b;X)} for $a \neq b$. For this, we assume that $ b^2 X > 1$ so that $J_{k-1}  ( {4\pi b \sqrt{x}}  )$ is oscillatory. In view of \eqref{2eq: Bessel function and Whittaker function} and \eqref{2eq: bounds for Whittaker functions}, the lemma below is a direct consequence of Lemma \ref{lem: staionary phase, dim 1, 2}.
	
	\begin{lem}\label{pre-key-lemma, 2} 
		Suppose that $b^2 X > 1$. Then $  I_k (a, b; X) = O (X^{-A})$ for any $A \geqslant 0$ if $ |   a -  b | \sqrt X > X^{\sepsilon}  $. 
	\end{lem}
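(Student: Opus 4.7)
The strategy is to split $J_{k-1}$ via \eqref{2eq: Bessel function and Whittaker function} into its two complex-exponential parts, reducing $I_k(a,b;X)$ to a pair of genuinely oscillatory integrals, and then invoke Lemma \ref{lem: staionary phase, dim 1, 2} (non-stationary phase by repeated integration by parts).

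First I would change variables $u = x/X$ in \eqref{3eq: defn I(a,b;X)} to get
\begin{equation*}
I_k(a,b;X) = X \int_0^\infty U(u)\, e(2a\sqrt{Xu})\, J_{k-1}(4\pi b\sqrt{Xu})\, \mathrm{d}u.
\end{equation*}
On the support $u \in [1,2]$, the hypothesis $b^2 X > 1$ ensures $4\pi b\sqrt{Xu} \gg 1$, so substituting \eqref{2eq: Bessel function and Whittaker function} expresses $I_k(a,b;X)$ as a sum over signs $\pm$ of
\begin{equation*}
\sqrt{X/b}\, \int_0^\infty U_\pm(u;X)\, e\bigl(2(a\pm b)\sqrt{Xu}\bigr)\, \mathrm{d}u,
\end{equation*}
where $U_\pm(u;X) = (8\pi^2)^{-1/2} u^{-1/4} U(u) W_{k-1,\pm}(4\pi b\sqrt{Xu})$. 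By \eqref{2eq: bounds for Whittaker functions} combined with the chain rule, each $u$-derivative of $W_{k-1,\pm}(4\pi b\sqrt{Xu})$ generates a factor of order $b\sqrt{X}$ that is exactly cancelled by the argument-decay $x^j W_{k-1,\pm}^{(j)}(x) \Lt 1$, so $U_\pm^{(j)}(u;X) \Lt_{k,j,U} 1$ uniformly for $b^2 X \geqslant 1$.

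Next I would verify that the phases $\phi_\pm(u) = 2(a\pm b)\sqrt{Xu}$ are non-stationary on $[1,2]$:
\begin{equation*}
|\phi_\pm'(u)| = |a\pm b|\sqrt{X/u} \geqslant |a - b|\sqrt{X}/\sqrt{2} > X^{\sepsilon}/\sqrt{2},
\end{equation*}
using $a,b > 0$ (so $a+b \geqslant |a-b|$ for the $+$ sign) together with the hypothesis $|a-b|\sqrt{X} > X^{\sepsilon}$; higher derivatives are of the same order of magnitude. Applying Lemma \ref{lem: staionary phase, dim 1, 2} with this lower bound on $|\phi_\pm'|$ delivers $O(X^{-A})$ for each of the two integrals after sufficiently many integrations by parts, and the prefactor $\sqrt{X/b} \leqslant \sqrt{X}$ is harmlessly absorbed by taking $A$ slightly larger.

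The only mild obstacle is amplitude bookkeeping: one has to confirm that the chain-rule cancellation in $U_\pm(\cdot;X)$ really does produce constants uniform in $b$ and $X$, which follows immediately from \eqref{2eq: bounds for Whittaker functions}, and that every differentiation of the oscillatory factor against which one integrates by parts gains a genuine $X^{-\sepsilon}$ saving rather than being polluted by the auxiliary factor $b\sqrt{X}$ inside the amplitude. Once this is observed, the conclusion is routine.
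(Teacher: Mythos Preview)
Your approach is exactly the paper's: split $J_{k-1}$ via \eqref{2eq: Bessel function and Whittaker function}, use \eqref{2eq: bounds for Whittaker functions} to control the amplitude, and then apply Lemma~\ref{lem: staionary phase, dim 1, 2}. One tiny slip: from $b^2X>1$ you only get $\sqrt{X/b}\leqslant X^{3/4}$, not $\sqrt{X}$, but this is still polynomial in $X$ and is absorbed just as you say.
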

	
	\subsection{Remarks on the Bessel integral}\label{sec:Weber-Hankel}
	After suitable changes, Weber's second exponential integral formula in \cite[13.31 (1)]{Watson} may be written as 
	\begin{equation}\label{2eq: Bessel product}
		\begin{split}
			\int_0^{\infty} \exp   (   - 2 \pi  x /X  )   J_{k-1} ( 4 \pi a \hskip -1pt \sqrt x) & J_{k-1} ( 4 \pi b \hskip -1pt  \sqrt x)  \shskip \nd x   \\
			& =  (X/2\pi) I_{k-1}  \lp  4 \pi {ab X}   \rp   \exp   \lp -   {2 \pi \big( a^2 + b^2 \big) X}  \rp    ,
		\end{split}
	\end{equation}
	for $a, b, X > 0$. Since $ J_{k-1} (4 \pi a \hskip -1pt \sqrt x) $ and $e ( 2 a \sqrt x)$ have the same type of oscillation (see \eqref{2eq: Bessel function and Whittaker function} or \cite[7.21 (1)]{Watson}), the Weber integral in \eqref{2eq: Bessel product} may be viewed as a variant of the Bessel integral in \eqref{3eq: defn I(a,b;X)}. However, the exponential function $  \exp   (   - 2 \pi  x /X  )$ is not as nice as the compactly supported function $U (x/X)$ from the perspective of Fourier analysis---the Fourier transform of $\exp   (   - 2 \pi  x /X  )$ ($x \in (0, \infty)$)  decays at $\infty$ only to the first order. 
	
	The connection between the Weber integral and the Dirac $\delta$-distribution might be of its own interest. This justifies the use of $\delta$ in another way. 
	
	According to \cite[7.23 (2)]{Watson}, we have the asymptotic $I_{k-1} (x) \sim \exp (x) / \sqrt {2\pi x}$ as $x \ra \infty$, so if one let $X \ra \infty$ then the right-hand side of \eqref{2eq: Bessel product} is asymptotic to 
	\begin{align*}
		\frac {\sqrt {2X} \exp  \lp - 2 \pi (a-b)^2 X \rp } {8 \pi^2 \hskip -1pt \sqrt {  ab  } }  = \frac {N (a-b, 1/ \hskip -1pt \sqrt {4\pi X })} { 8 \pi^2 \hskip -1pt \sqrt {ab} } \ra \frac {   \delta (a-b) } {8 \pi^2 b },
	\end{align*}
	where $ N(a-b, 1/ \hskip -1pt \sqrt {4\pi X}) $ is the Gaussian distribution of variance $1/ \hskip -1pt \sqrt {4\pi X}$ and $\delta (a-b)$ is the Dirac $\delta$-distribution. Thus the limiting form of \eqref{2eq: Bessel product} is 
	\begin{align}
		\int_0^{\infty}  J_{k-1} ( 4 \pi a \hskip -1pt \sqrt x)  J_{k-1} ( 4 \pi b \hskip -1pt  \sqrt x) \shskip \nd x = \frac {   \delta (a-b) } { 8 \pi^2 \hskip -1pt b },
	\end{align}
	or 
	\begin{align}
		\int_0^{\infty}  J_{k-1} ( a x)  J_{k-1} ( b x) x \shskip \nd x = \frac {   \delta (a-b) } {  b },
	\end{align}
	while this is equivalent to the Hankel inversion formula (see  \cite[14.3 (3), 14.4 (1)]{Watson})
	\begin{align}\label{2eq: Hankel inversion}
		\int_0^{\infty} x \shskip  \nd x \int_0^{\infty} F (a)  J_{k-1} (  a  x)  J_{k-1} (  b  x) a \shskip  \nd\shskip a = F (b) ,  
	\end{align}
	for $F  (a) \in C^{\infty} (0, \infty) $ subject to the condition
	\begin{align}
		\int_0^{\infty} |F (a)| \sqrt a \hskip 1pt \nd \shskip a < \infty .
	\end{align}

	\subsection{A Bessel $\delta$-method}
	
	By Lemma \ref{pre-key-lemma} and \ref{pre-key-lemma, 2}, we have the following asymptotic $\delta$-identity.
	
	\begin{lem}\label{lem: Bessel-delta}
		Let  $p$ be prime and $N, X > 1$ be such that  $ X > p^2 / N$ and $ X^{1-\sepsilon} >   N$. Let $r, n$ be integers in the dyadic interval $ [N, 2N]$. For any $A \geqslant 0$, we have
		\begin{equation}\label{2eq: Bessel-delta}
			\begin{split}
				\frac {2 \pi C_U r^{1/4} } { i^k p^{1/2} X^{3/4} } \cdot \frac 1 p \sum_{ a (\mod p) }  e \lp \frac {a (n-r) } p \rp  & \cdot I_k \lp \frac {\hskip -1pt \sqrt r} p,\frac {\hskip -1pt \sqrt n} p;X \rp  \\
				=  \delta({r = n})   &   \lp 1 + O_{k, \shskip U} \lp    \frac {p } {  \sqrt{NX} }  \rp \rp + O_{k, \shskip U, \shskip A} \big(X^{-A} \big), 
			\end{split}
		\end{equation} 
		where  $ C_U = (1+i)  / { \widetilde{U}(3/4)  } $, the $ \delta ({r = n}) $ is the Kronecker $\delta$ that detects $r = n$, and the implied constants depend only on $k$, $U$ and $A$. 
	\end{lem}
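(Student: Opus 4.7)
My plan is to separate the roles of the additive-character sum and the Bessel integral $I_k$ and then combine them by a short case analysis on the residue class of $n-r$ modulo $p$. Classical orthogonality gives
$$\frac{1}{p}\sum_{a(\mod p)} e\!\left(\frac{a(n-r)}{p}\right) = \delta(n \equiv r \shskip(\mod p)),$$
so the left-hand side of (\ref{2eq: Bessel-delta}) vanishes automatically when $p \nmid (n-r)$, matching the right-hand side (both $\delta(r=n)$ and the main term are zero). It remains to analyze the two possibilities under $n \equiv r \shskip(\mod p)$ with $r, n \in [N, 2N]$: either $n = r$ (diagonal), or $n \neq r$ with $|n-r| \geqslant p$ (off-diagonal).

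For the diagonal case $n = r$, I invoke Lemma \ref{pre-key-lemma} with $a = b = \sqrt{r}/p$. The hypothesis $X > p^2/N$ gives $a^2 X = rX/p^2 \geqslant NX/p^2 > 1$, so the asymptotic (\ref{3eq: I(a,a;X) asymptotic}) applies and produces
$$I_k\!\left(\sqrt{r}/p,\sqrt{r}/p;X\right) = \frac{(1+i)\,i^{k-1}\widetilde{U}(3/4)\,X^{3/4} p^{1/2}}{4\pi\, r^{1/4}} + O\!\left(\frac{X^{1/4} p^{3/2}}{r^{3/4}}\right).$$
Multiplying by the normalization factor $2\pi C_U r^{1/4}/(i^k p^{1/2} X^{3/4})$, the algebraic identity $(1+i)^2 = 2i$ together with the definition $C_U = (1+i)/\widetilde{U}(3/4)$ collapses the main contribution to exactly $1$, while the error becomes $O(p/\sqrt{NX})$, which is the claimed multiplicative error attached to $\delta(r=n)$.

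For the off-diagonal case $n \neq r$ with $p \mid (n-r)$, I use the factorization $\sqrt{n} - \sqrt{r} = (n-r)/(\sqrt{n}+\sqrt{r})$. Since $|n-r| \geqslant p$ and $\sqrt{n}+\sqrt{r} \asymp \sqrt{N}$, we obtain
$$\left|\frac{\sqrt{n}}{p} - \frac{\sqrt{r}}{p}\right|\sqrt{X} \;\gg\; \sqrt{X/N}.$$
The hypothesis $X^{1-\sepsilon} > N$ yields $X/N > X^{\sepsilon}$, hence $\sqrt{X/N} > X^{\sepsilon/2}$; after a harmless tightening of $\vepsilon$ this is $> X^{\sepsilon}$ in the sense of Lemma \ref{pre-key-lemma, 2} (using also $b^2 X = nX/p^2 > 1$ from $X > p^2/N$). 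That lemma then gives $I_k(\sqrt{r}/p,\sqrt{n}/p;X) = O(X^{-A})$ for any $A \geqslant 0$. Absorbing the polynomially-sized normalization $2\pi C_U r^{1/4}/(i^k p^{1/2} X^{3/4})$ only changes the admissible value of $A$, so the off-diagonal contribution is $O(X^{-A})$ as required.

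I do not anticipate a genuine obstacle here: the analytic content has been done in Lemmas \ref{pre-key-lemma} and \ref{pre-key-lemma, 2}. The points that demand care are essentially bookkeeping — first, that the constant $C_U$ and the prefactor in (\ref{2eq: Bessel-delta}) have been engineered precisely so that the leading order of (\ref{3eq: I(a,a;X) asymptotic}) normalizes to $1$; and second, that the two size conditions $p^2 < NX$ and $X^{1-\sepsilon} > N$ are exactly what is needed to invoke the two Bessel asymptotics in the two cases, no more and no less.
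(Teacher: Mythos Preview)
Your proof is correct and follows essentially the same approach as the paper: orthogonality of additive characters to detect $n\equiv r\ (\mod p)$, Lemma~\ref{pre-key-lemma} for the diagonal $n=r$, and Lemma~\ref{pre-key-lemma, 2} combined with the hypothesis $X^{1-\sepsilon}>N$ to kill the off-diagonal case $p\mid(n-r)$, $n\neq r$. Your write-up is in fact more explicit than the paper's (which compresses the argument into three sentences), including the verification that the constant $C_U$ makes the leading term exactly $1$.
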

	
	\begin{proof}
		Lemma \ref{pre-key-lemma} yields the $\delta$-term, while Lemma \ref{pre-key-lemma, 2} implies that $I_k ( \hskip -1.5pt {\sqrt r}/ p, \hskip -1.5pt {\sqrt n}/ p ; X )$ is negligibly small unless $|r-n| \leqslant X^{ \sepsilon} p \sqrt {N/X}$. On the other hand, the exponential  sum in \eqref{2eq: Bessel-delta} gives us $r \equiv n (\mod p)$. Consequently, \eqref{2eq: Bessel-delta} follows immediately for $ X^{ \sepsilon} p \sqrt {N/X} < p $ as   assumed.
	\end{proof}

	\begin{rem}
		We should point out that the identity \begin{equation*}
			\begin{split}
				\frac{1}{p}\sum_{a (\mod p)}e\left(\frac{a(n-r)}{p}\right)= \delta({n\equiv r (\mod p)})
		\end{split}\end{equation*} 
		plays a key role in the work \cite{AHLS}. In fact, the approach therein is based on the observation:
		\begin{equation*}
			\begin{split}\sum_{r\shskip \sim N}\vchi (r)\sum_{n\sim X}\lambdaup_g(n)S(r,n;c)\approx X\sum_{n\sim N}\lambdaup_g(n)\vchi(n),
		\end{split}\end{equation*}
		where the modulus $c$ is chosen to be $c=p q\Gt N^{1+\sepsilon}$ and $X={p^2q^2}/{N}${\rm;} $\vchi$ is a primitive Dirichlet character modulo $q$. 
		
		Here the Bessel-exponential integral $I_k ( \hskip -1.5pt {\sqrt r}/ p, \hskip -1.5pt {\sqrt n}/ p ; X )$ serves the role of ``lowering" the conductor of the underlying problem.
	\end{rem}

	\section{\texorpdfstring{Application of the Bessel $\delta$-method and the \Voronoi summation}{Application of the Bessel $\delta$-method and the Voronoi summation}}
	
	We start with separating oscillations by writing
	\begin{align*}
		S(N) =   \sum_{n=1}^\infty \lambdaup_g(n)e(f (n))V\left(\frac{n}{N}\right)= \sum_{r=1}^\infty e(f (r))  V\left(\frac{r}{N}\right) \sum_{n=1}^\infty \lambdaup_g(n) \delta (r=n).
	\end{align*}
	Applying the $\delta$-method identity \eqref{2eq: Bessel-delta} in Lemma \ref{lem: Bessel-delta} and dividing the $a$-sum according as $(a, p) = 1$ or not, we have
	\begin{align*}
		S(N) =   {S}_p^{\star} (N, X) +   {S}_p^{ \scalebox{0.65} 0} (N, X) +  R_p(N, X) +  O \big(X^{-A} \big),
	\end{align*}
	with
	\begin{equation}
		\begin{split}
			{S}_p^{\star} (N, X) =  \frac {2 \pi i^{k} {M^{1/2} N^{1/4}  } } {   \eta_g   p^{3/2} X^{3/4} } & \sum_{r=1}^\infty e(f (r))  V_{\scriptscriptstyle \natural}   \hskip -1pt \left(\frac{r}{N}\right)   \sumx_{ a (\mod p) }  e \lp - \frac {a r } p \rp \\
			\cdot	&\sum_{n=1}^\infty \lambdaup_g(n) e \lp \frac {a n } p \rp I_k \lp \frac {\hskip -1pt \sqrt r} p,\frac {\hskip -1pt \sqrt n} p;X \rp,
		\end{split} 
	\end{equation}
	\begin{equation}
		\begin{split}
			{S}_p^{ \scalebox{0.65} 0} (N, X)  =  \frac {2 \pi i^{k} {M^{1/2} N^{1/4}  } } {   \eta_g  p^{3/2} X^{3/4} } \sum_{r=1}^\infty e(f (r))  V_{\scriptscriptstyle \natural}     \hskip -1pt \left(\frac{r}{N}\right)  \hskip -2pt \sum_{n=1}^\infty \lambdaup_g(n) I_k \lp \frac {\hskip -1pt \sqrt r} p,\frac {\hskip -1pt \sqrt n} p;X \rp,
		\end{split} 
	\end{equation}
	where $V_{\scriptscriptstyle \natural} (x) =  C_U \eta_g \xi (-1)  M^{-1/2} \cdot x^{1/4} V (x) $ (recall that $\xi (-1) = (-1)^k$) and {\small $\displaystyle \sumx$} means that the $a$-sum is subject to $(a, p) = 1$, and
	\begin{equation}
		R_p (N, X) = O \bigg( \frac {p } {  \sqrt{NX} } \sum_{n \sim N} |\lambdaup _g(n)|  \bigg)  = O \bigg(  p \sqrt {\frac N X}   \bigg) .
	\end{equation}
	Assuming $p > M$, we now apply the \Voronoi summation in Lemma \ref{lem: Voronoi} to  the $n$-variable. Recall from \eqref{3eq: defn I(a,b;X)} that
	\begin{align*}
		I_k \lp \frac {\hskip -1pt \sqrt r} p,\frac {\hskip -1pt \sqrt n} p;X \rp =  \int_0^\infty U\left( {x}/{X}\right) e \lp \frac {2   \sqrt{ r x}} {p} \rp J_{k-1} \lp \frac {4\pi  \sqrt{ n x}} p  \rp  \mathrm{d}x, 
	\end{align*}
	and the integral may be regarded  as a Hankel transform as in \eqref{2eq: Hankel transform}.	By applying the (complex conjugation of) \Voronoi summation in \eqref{1eq: Voronoi} with $c = p$ in the {\it reversed} direction,  we infer that 
	\begin{align}\label{4eq: Sx after Voronoi}
		{S}_p^{\star} (N, X)  \hskip -0.5pt =  \hskip -0.5pt  { \frac { \xi (p)  N^{1/4} } { p^{1/2} X^{3/4} } } & \sum_{r=1}^\infty e(f (r))  V_{\scriptscriptstyle \natural}    \hskip -1.5pt \left(\frac{r}{N}\right)    \hskip -1pt
		\sum_{n=1}^\infty \overline {\lambdaup_{  g } (n)} S(n, r; p)  e \hskip -1pt \lp \frac {2 \sqrt {n r}} { \sqrt M p } \rp \hskip -1pt U \hskip -1pt \left(\frac {n} {M X}\right) \hskip -1pt ,
	\end{align}
	where, as usual, $S (n, r; p)$ is the Kloosterman sum 
	\begin{align*}
		S (n, r; p) = \sumx_{ a (\mod p) }  e \lp  \frac {a n + \overline{a} r } p \rp.
	\end{align*}
	Similarly, 
	\begin{align}\label{4eq: S0 after Voronoi}
		{S}_p^{\scalebox{0.65} 0} (N, X)   =  \frac { p^{1/2} N^{1/4} } {  X^{3/4} } \sum_{r=1}^\infty e(f (r))  V_{\scriptscriptstyle \natural}    \hskip -1.5pt \left(\frac{r}{N}\right)    \hskip -1pt
		\sum_{n=1}^\infty \overline {\lambdaup_{  g } (n)} e \lp \frac {2 \sqrt {n r}} { \sqrt M  } \rp U\left(\frac {p^2 n} {M X}\right),
	\end{align}
	after the Vorono\"i with modulus $c = 1$. Estimating trivially, we find that
	\begin{align}\label{4eq: bound for S0}
		{S}_p^{\scalebox{0.65} 0} (N, X) \Lt  \frac { N^{5/4} X^{1/4} } {p^{3/2} }. 
	\end{align}
	Finally, we introduce an average over primes $p$ in $[P, 2P]$ for a large parameter $P$; there are $\asymp P/\log P$ many such $p$'s. The results that we have established are summarized as follows.
	
	\begin{prop}\label{key lemma}
		Let $V (x) \in C_c^{\infty} (0, \infty) $ be supported in $[1, 2]$, with $\mathrm{Var} (V) \Lt 1$ and $V^{(j)} (x) \Lt_{j} \varDelta^j$  for $j \geqslant 0$.	Let parameters $N, X, P > N^{\shskip \sepsilon}$ be such that
		\begin{align}\label{4eq: asummptions on N,X,P}
			P^2/N < X, \hskip 10pt N < X^{1-\sepsilon}.
		\end{align} 
		Let $P^{\star}$ be the number of primes in $[P, 2P]$.
		We have
		\begin{equation}\label{4eq: S(N)=S(N,X,P)}
			\begin{split}
				S (N) = \sum_{n=1}^\infty \lambdaup _g(n)e(f (n)) V\left(\frac{n}{N}\right) = {S}(N, X, P)
				+ O  \left( \frac {P \hskip -1pt \sqrt N } { \sqrt X } + \frac{N^{5/4}X^{1/4}}{P^{3/2}}\right),
			\end{split}
		\end{equation}
		with
		\begin{equation}\label{beginning object}
			\begin{split}
				{S}(N, X, P) = \frac{ N^{1/4} }{P^{\star} X^{3/4} }  \sum_{ p \shskip \sim P}   
				\frac {\xi (p)}   {\sqrt {p}}
				& \sum_{r=1}^{\infty}e(f (r))  V_{\scriptscriptstyle \natural}   \left(\frac{r}{N}\right) \\
				\cdot &\sum_{n=1}^{\infty} \overline {\lambdaup_{  g } (n)} S(n,r;p)
				e \hskip -1pt \lp \frac {2 \sqrt {n r}} { \sqrt M p } \rp \hskip -1pt U \hskip -1pt \lp  \frac { n } { M X }  \rp \hskip -1pt,
			\end{split}
		\end{equation}
		where $V_{\scriptscriptstyle \natural}    (x) =  C_U \eta_g \xi (-1)  M^{-1/2} \cdot x^{1/4} V (x) $  is again supported in $[1, 2]$, satisfying $\mathrm{Var}({V_{\scriptscriptstyle \natural}}) \Lt 1$ and $V_{\scriptscriptstyle \natural}^{(j)} (x) \Lt_{j} \varDelta^j$. 
	\end{prop}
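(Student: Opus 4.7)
The plan is to follow the decomposition strategy already sketched in the discussion preceding the proposition: insert the Bessel $\delta$-identity of Lemma \ref{lem: Bessel-delta} into the trivial expansion of $S(N)$, separate out the contribution of $a \equiv 0 \pmod{p}$, apply Voronoi summation (Lemma \ref{lem: Voronoi}) to the inner $n$-sum, and then average over primes in $[P, 2P]$.

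First I would write
\[
S(N) = \sum_{r=1}^\infty e(f(r)) V(r/N) \sum_{n=1}^\infty \lambdaup_g(n) \delta(r = n)
\]
and substitute \eqref{2eq: Bessel-delta}; the hypotheses \eqref{4eq: asummptions on N,X,P} ensure that Lemma \ref{lem: Bessel-delta} applies for every prime $p \sim P$. The identity replaces $\delta(r=n)$ by a Bessel-integral expression weighted by an exponential sum modulo $p$, at the cost of a Kronecker-$\delta$ error of size $O(p/\sqrt{NX})$ and a $O(X^{-A})$ negligible term. The Kronecker-$\delta$ error, once integrated against $\sum_n \lambdaup_g(n) e(f(r))$, is bounded by $\sum_{n \sim N} |\lambdaup_g(n)| \Lt N$ (consequence of Cauchy--Schwarz and \eqref{2eq: Ramanujan}), giving $R_p(N, X) \Lt p\sqrt{N/X}$. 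Splitting the $a$-sum according to $(a, p) = 1$ versus $a \equiv 0 \pmod{p}$ then produces the pieces $S_p^\star(N, X)$ and $S_p^0(N, X)$, yielding the identity $S(N) = S_p^\star + S_p^0 + R_p + O(X^{-A})$ for each admissible $p$.

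The central step is the Voronoi transformation of the inner $n$-sum, valid since $p > M$ forces $(p, M) = 1$. The key observation, which is the whole point of the Bessel $\delta$-method, is that $I_k(\sqrt{r}/p, \sqrt{n}/p; X)$ is precisely (up to an explicit constant) the Hankel transform \eqref{2eq: Hankel transform} of an explicit weight; concretely, $I_k(\sqrt{r}/p, \sqrt{n}/p; X) = (i^k p^2 /2\pi) \check{G}(n/(p^2 M))$ where $G(z) = M^{-1} U(z/(MX)) e(2\sqrt{rz/M}/p)$, as one verifies by a change of variables (or heuristically via the $\delta$-type identity \eqref{2eq: Hankel inversion}). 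Applying Voronoi with $c = p$ to $S_p^\star$ then converts $e(an/p)$ to $e(-\overline{a}n/p)$ on the dual side; combined with $e(-ar/p)$ the $a$-sum becomes the Kloosterman sum $S(n, r; p)$, and collecting constants (using $\xi(-1) = (-1)^k$ and $i^{2k} = (-1)^k$) gives \eqref{4eq: Sx after Voronoi}. The same transform with $c = 1$ applied to $S_p^0$ yields \eqref{4eq: S0 after Voronoi}, after which the trivial bound $\sum_{n \sim MX/p^2} |\lambdaup_g(n)| \Lt MX/p^2$ combined with the length-$N$ outer $r$-sum produces \eqref{4eq: bound for S0}.

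The final step is to average the identity $S(N) = S_p^\star + S_p^0 + R_p + O(X^{-A})$ over $p \sim P$: the $S_p^\star$ average is by definition $S(N, X, P)$, while the other two averages are dominated by their respective maxima $P\sqrt{N/X}$ and $N^{5/4} X^{1/4}/P^{3/2}$, producing the stated error term. The claimed properties of $V_\natural(x) = C_U \eta_g \xi(-1) M^{-1/2} x^{1/4} V(x)$ follow at once from the Leibniz rule, since $x^{1/4}$ is smooth and bounded on $[1, 2]$. The main technical obstacle, while not conceptually deep, is the careful accounting of the complex constants, signs, and normalizing factors through the reverse application of Voronoi summation; the remainder of the argument is a transparent chain of substitutions once the Hankel-transform interpretation of $I_k$ is in place.
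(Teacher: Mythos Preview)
Your proposal is correct and follows essentially the same route as the paper: insert the Bessel $\delta$-identity, split off the $a\equiv 0$ term, recognize $I_k(\sqrt{r}/p,\sqrt{n}/p;X)$ as the Hankel transform of $U(\cdot/X)e(2\sqrt{r\,\cdot}/p)$ so that Vorono\"\i\ (run in the reverse direction) undoes it, bound $S_p^{0}$ trivially, and average over $p\sim P$. One small slip: your explicit constant in the Hankel identification should be $(2\pi i^k)^{-1}$ rather than $i^k p^2/2\pi$, but as you yourself note this is just bookkeeping and does not affect the argument.
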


	\section{Application of the Poisson summation and the Cauchy inequality}
	
	In view of Proposition \ref{key lemma}, to study $S (N)$ it suffices to consider the sum ${S}(N, X, P)$ defined in \eqref{beginning object}. 
	For convenience of our analysis, we let 
	\begin{align}\label{5eq: XN=P2K2}
		X = {P^2K^2}/{N},  \hskip 15pt N^{\shskip\sepsilon} < K < T^{1-\sepsilon},
	\end{align}
	with the parameter $K$ to be optimized later. Then the first assumption in \eqref{4eq: asummptions on N,X,P} is justified, while the second assumption $N<X^{1-\sepsilon}$  amounts to
	\begin{align}
		\label{5eq: assumption on K} 
		P> {N^{1+\sepsilon}} / {K}.
	\end{align}  
	
	\subsection{First application of the Poisson summation} Recall  that $f (r) = T \phi (r/N) + \gamma r$ (as in  \eqref{0eq: f = T phi}). By applying the Poisson summation to the $r$-sum in \eqref{beginning object}, we have
	\begin{align*}
		\sum_{r=1}^{\infty} e(f (r))  S(n,r;p)  e \hskip -1pt \lp \frac {2 \sqrt {n r}} { \sqrt M p } \rp  \hskip -1pt V_{\scriptscriptstyle \natural}    \hskip -1pt \left(\frac{r}{N}\right) = N \sum_{(r, \shskip p) = 1}  e \hskip -1pt\left( - \frac{\overline{r} n }{p}\right)\CalJ (n,r,p),
	\end{align*}
	where 
	\begin{equation}\label{integral}
		\begin{split}
			\CalJ (y,r,p)=\int_{0 }^{\infty} V_{\scriptscriptstyle \natural}   (x) e \hskip -1pt  \left( T \phi (x) + \gamma N x + \frac{2\sqrt{Nx y}}{ \sqrt M p} -\frac{rNx}{p}\right)   \mathrm{d}x.
		\end{split}
	\end{equation}
	Recall that $y$ ($=n$) $\sim   M X$. Thus the derivative of the phase function is
	\begin{align*}
		N (\gamma - r/p) + T \phi'(x) + \frac {\sqrt{N y}} {\sqrt{M x} p} = N (\gamma - r/p) + O \bigg( T + \frac {\sqrt{N X}} {P} \bigg),
	\end{align*}
	and hence it is dominated by $ N (\gamma - r/p) $  when $$  N |r / p - \gamma |  \Gt  \max \big\{ T ,   \sqrt {NX}  / {P} \big\} =   \max \left\{ T,   K  \right\} = T . $$ Under this condition, Lemma \ref{lem: staionary phase, dim 1, 2}   implies that $\CalJ (y,r,p)$ is negligibly small, provided that $\phi^{(j)} (x) \Lt_j 1$ ($j \geqslant 1$) and that $V_{\scriptscriptstyle \natural}^{(j)} (x) \Lt_{j} \varDelta^j$ for $\varDelta \leqslant T / N^{\sepsilon} $.  
	Accordingly, 
	set 
	\begin{align}\label{5eq: R = Pt/N}
		R =    P T / N  .
	\end{align}
	So if we  assume that
	\begin{align}\label{5eq: Delta < T1}
		\varDelta \leqslant T /N^{\sepsilon}, 
	\end{align} 
	then we can effectively truncate the sum at $|r - \gamma p| \sasymp R$, at the cost of a negligible error. Note that \eqref{5eq: Delta < T1} amounts to the condition \eqref{1eq: condition on Delta} in Theorem \ref{main-theorem}.
	
	Moreover, the second derivative test in Lemma \ref{lem: derivative tests, dim 1}  immediately yields the following estimate for $ \CalJ (y,r,p) $.
	\begin{lem}\label{lem: bound for J}
		Suppose that $|\phi'' (x) | \Gt 1$. Then, for $1 \leqslant y / MX \leqslant 2 $, we have
		\begin{align}\label{5eq: bound for Jt(npr),2}
			\CalJ (y,r,p) \Lt  \frac  1 {\sqrt {T } } .
		\end{align}
	\end{lem}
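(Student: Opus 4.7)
The plan is to apply the second derivative test of Lemma~\ref{lem: derivative tests, dim 1} directly to the oscillatory integral \eqref{integral}. Writing the phase as
$$\Phi(x) = T \phi(x) + \gamma N x + \frac{2\sqrt{N x y}}{\sqrt{M}\, p} - \frac{r N x}{p},$$
I would first compute
$$\Phi''(x) = T \phi''(x) - \frac{\sqrt{N y}}{4\sqrt{M}\, p\, x^{3/2}},$$
so that the $\gamma$- and $r$-dependent linear terms disappear. The task then reduces to establishing $|\Phi''(x)| \Gt T$ uniformly on the support $x \in [1,2]$ of $V_{\scriptscriptstyle \natural}$.

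The only substantive point is to verify that the Bessel-derived term in $\Phi''$ is dominated by $T\phi''(x)$. Using the hypothesis $y \asymp M X$, the parameter relation $X = P^2 K^2 / N$ from \eqref{5eq: XN=P2K2}, and $p \asymp P$, the size of this second term is
$$\frac{\sqrt{N y}}{\sqrt{M}\, p\, x^{3/2}} \asymp \frac{\sqrt{N X}}{P} = K,$$
which by \eqref{5eq: XN=P2K2} satisfies $K < T^{1-\sepsilon}$. Combined with the hypothesis $|\phi''(x)| \Gt 1$, this forces $|\Phi''(x)| = |T\phi''(x)|\bigl(1 + O(K/T)\bigr) \Gt T$ throughout the support, and the second derivative test then gives $\CalJ(y,r,p) \Lt 1/\sqrt{T}$.

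The weight function $V_{\scriptscriptstyle \natural}$ causes no trouble: its bounds $\mathrm{Var}(V_{\scriptscriptstyle \natural}) \Lt 1$ and $V_{\scriptscriptstyle \natural}^{(j)} \Lt \varDelta^j$ with $\varDelta \leqslant T/N^{\sepsilon}$ (from \eqref{5eq: Delta < T1}) are absorbed in the usual statement of the second derivative test, since $\varDelta^2 \Lt T^{2-2\sepsilon}$ and any derivative penalty on the weight is of lower order than the gain from $|\Phi''| \Gt T$. There is no real obstacle; the lemma is a one-step application of a standard bound, and the only thing requiring genuine checking is the clean domination of the $\phi''$ term, which is built into the parameterization chosen in \eqref{5eq: XN=P2K2}.
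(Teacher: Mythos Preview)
Your approach is correct and coincides with the paper's: compute the second derivative of the phase, observe that the Bessel-derived term is $O(K)$ while $T\phi''(x)\asymp T$ dominates since $K<T^{1-\sepsilon}$, and invoke Lemma~\ref{lem: derivative tests, dim 1}. The only slip is a harmless factor of two in your expression for $\Phi''$ (the correct coefficient is $-\tfrac{\sqrt{Ny}}{2\sqrt{M}\,p\,x^{3/2}}$), and your final paragraph on the weight is unnecessary since Lemma~\ref{lem: derivative tests, dim 1} uses only $\mathrm{Var}(V_{\scriptscriptstyle \natural})\Lt 1$.
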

	\begin{proof}
		The second derivative of the phase function in \eqref{integral} is equal to 
		\begin{align*}
			T \phi''(x) - \frac {\sqrt{N y} } {2 p \sqrt{M x} x} = T \phi''(x) + O (K). 
		\end{align*}
		By our assumptions, $|\phi'' (x) | \Gt 1$ and $K < T^{1-\sepsilon}$, the estimate above follows easily from Lemma \ref{lem: derivative tests, dim 1}.
	\end{proof}
	Consequently,  \eqref{beginning object} is transformed into
	\begin{equation}
		\begin{split}
			{S}(N, X, P) \hskip -1pt =  \hskip -1pt \frac{N^{2 }}{P^{\star} (P K)^{3/2} } \hskip -1pt \sum_{n=1}^{\infty} \hskip -1pt \overline {\lambdaup_{  g } (n)} 
			U \hskip -1pt \left(\frac{n}{MX}\right) \hskip -1.5pt
			\sum_{p \shskip \sim P} \hskip -1.5pt \frac {\xi (p)} {\sqrt p} \hskip -1.5pt
			\sum_{\sstyle (r,\, p)=1 \atop {\sstyle |r-\gamma p| \Lt \shskip R}} \hskip -1.5pt e\left(\hskip -1pt -\frac{\overline{r}n }{p} \hskip -1pt \right)\CalJ (n,r,p) & \\
			+ O \big(N^{-A}\big) &.
		\end{split}
	\end{equation}

	\subsection{Application of the Cauchy inequality and the second Poisson summation} Next we apply Cauchy and the Ramanujan bound on average for  the Fourier coefficients $  {\lambdaup_{  g } (n)} $ as in \eqref{2eq: Ramanujan}. Thus,
	\begin{equation*}
		\begin{split}
			{S}(N, \hskip -0.5pt X, \hskip -0.5pt P) \hskip -1pt \Lt_{g} \hskip -1pt \frac{N^{3/2 }}{P^{\star} \hskip -1pt \sqrt{PK} }
			\Bigg(\sum_{n=1}^{\infty}\Bigg|\hskip -1pt \sum_{p \shskip \sim P} \hskip -1pt \frac {\xi (p)} {\sqrt p} \hskip -2pt
			\sum_{\sstyle (r, \shskip p)=1 \atop {\sstyle |r-\gamma p| \Lt R}}\hskip -2pt 
			e\left( \hskip -1pt  - \frac{\overline{r}n }{p} \hskip -1pt \right) \hskip -1pt
			\CalJ (n,\hskip -0.5pt r,\hskip -0.5pt p) \Bigg|^2 \hskip -1pt U \left(\frac{n}{M X}\right) \hskip -1pt \Bigg)^{\hskip -2pt 1/2}.
	\end{split}\end{equation*}
	Opening the square and switching the order of summations, the square of the right-hand side is
	\begin{equation}\label{afterCauchy-Schwarz}
		\begin{split}
			\scalebox{0.98}{\text{$\displaystyle\frac{N^{3 }}{{P^{\star 2}}  P K} \hskip -1.5pt \underset{p_1,\shskip p_2 \shskip \sim P}{\sum \sum}  \frac { \xi (p_1 \overline p_2)  } {\sqrt {p_1 p_2} } \hskip -1.5pt \underset{\sstyle (r_i ,\shskip p_i)=1  \atop {\sstyle |r_i - \gamma p_i|  \Lt R }}{\sum \sum}  \sum_{n=1}^{\infty} e \hskip -1.5pt \left(\frac{\overline{r}_2 n }{p_2} \hskip -1pt - \hskip -1pt \frac{\overline{r}_1 n }{p_1} \hskip -1pt \right) \hskip -1pt \CalJ (n,r_1,p_1)\overline{\CalJ (n,r_2,p_2)} U \hskip -1pt\left(\hskip -0.5pt \frac{n}{M X} \hskip -0.5pt \right) \hskip -1.5pt .$}}	
		\end{split}
	\end{equation}
	
	\begin{rem}\label{remark diagonal}
		To keep in mind some representative cases, we notice that the diagonal contribution $(p_1,r_1)=(p_2,r_2)$ towards ${S}(N, X, P)$ is
		\begin{equation*}
			\begin{split}
				\frac{N^{3/2 }}{P^{\star} \hskip -1pt \sqrt {PK} } \Bigg(\underset{p  \shskip \sim P}{\sum } \frac { 1 } {p  }  \hskip -1pt \underset{\sstyle (r,\, p)=1 \atop {\sstyle |r - \gamma p | \shskip  \Lt   R}}{\sum }  \sum_{n=1}^{\infty}   \big|\CalJ (n,r,p)\big|^2 U \hskip -1.5pt\left(\hskip -1pt \frac{n}{MX} \hskip -1pt \right) \Bigg)^{\hskip -2pt 1/2}   \Lt_{ M} \sqrt {NK \log P}  ,
		\end{split}\end{equation*}
		where $R = P T / N$   as in {\rm\eqref{5eq: R = Pt/N}} and we have used the bound in Lemma {\rm\ref{lem: bound for J}} for $\CalJ (n,r,p) $. 
		
		It is therefore important to introduce the extra average over $p$ as in {\rm\eqref{beginning object}}, because without  it the diagonal contribution would be $O(\hskip -1.5pt\sqrt {p N K} )$ instead. 
	\end{rem}
	
	We then apply Poisson summation with modulus $p_1p_2$ (note that $p_1$ and $p_2$ need not be distinct) to the $n$-sum in \eqref{afterCauchy-Schwarz}, getting
	\begin{equation}\label{n sum after poisson}
		\begin{split}	& \hskip 13pt \sum_{n=1}^{\infty} e \hskip -1pt \left(\frac{\overline{r}_2 n }{p_2}  - \frac{\overline{r}_1 n }{p_1} \hskip -1pt \right) \hskip -1pt
			\CalJ (n,r_1,p_1)\overline{\CalJ (n,r_2,p_2)} 
			U \hskip -1pt\left(\hskip -1pt \frac{n}{MX} \hskip -1pt \right)\\
			&=\frac{MX}{p_1p_2}\sum_{n=-\infty}^{\infty} \, \sum_{a (\mod p_1p_2)}e\left( \frac{a\overline{r}_2 }{p_2} -\frac{a\overline{r}_1  }{p_1} + \frac{an}{p_1p_2}\right) \hskip -1pt \cdot \hskip -1pt \CalL\left(\frac{MXn}{p_1p_2}; r_1,r_2,p_1,p_2\right),
		\end{split}
	\end{equation}
	with 
	\begin{align}\label{5eq: I integral}
		\CalL ( x  ) = \CalL ( x; r_1,r_2,p_1,p_2  )  = \int_{0 }^{\infty} U(y)  \CalJ (MXy,r_1,p_1) \overline{\CalJ (MXy,r_2,p_2)} \,e\left(- x y\right)\mathrm{d}y.
	\end{align}
	Recall that $\sqrt {NX} = P K$ as in  \eqref{5eq: XN=P2K2} and that $  \CalJ (MXy,r,p) $ is defined as in \eqref{integral}. We have
	\begin{equation}\label{5eq: I = triple integral}
		\begin{split}
			\CalL  ( x  ) \hskip -1pt =   \hskip -1pt \int_{0 }^{\infty} \hskip -2pt \int_{0 }^{\infty} \hskip -1pt V_{\scriptscriptstyle \natural} (\varv_1) \overline{V_{\scriptscriptstyle \natural}  (\varv_2)}  e \hskip -1pt \left( \hskip -1pt T (\phi (\varv_1) \hskip -1pt - \hskip -1pt \phi (\varv_2)) \hskip -1pt + \hskip -1pt \gamma N ( \varv_1\hskip -1pt - \hskip -1pt  \varv_2) \hskip -1pt - \hskip -1pt \frac{Nr_1\varv_1}{p_1} 
			\hskip -1pt + \hskip -1pt \frac{Nr_2\varv_2}{p_2} \hskip -1pt \right)  & \\
			\cdot	\int_{0 }^{\infty} U (y)  e \hskip -1pt \left(2   PK  \lp \frac{ \sqrt{ \varv_1 }}{p_1}  - \frac{ \sqrt{ \varv_2 }}{p_2} \rp \hskip -1pt \sqrt { y} - x y \right) \mathrm{d}y\, \mathrm{d} \varv_2 \, \mathrm{d} \varv_1 . &
		\end{split}
	\end{equation}
	We note that the $a$-sum in \eqref{n sum after poisson} yields the congruence condition 
	\begin{equation}\label{5eq: congruence condition}
		\begin{split} n \equiv \overline{r}_1 p_2- \overline{r}_2 p_1     \, (\mod p_1p_2),
		\end{split} 
	\end{equation} 
	where $\overline{r}_1$ and $\overline{r}_2$ denote the multiplicative inverses of $r_1$ and $r_2$ modulo $p_1$ and $p_2$ respectively. Thus the right-hand side of \eqref{n sum after poisson} is simplified to 
	\begin{align}\label{5eq: n sum after Poisson, 2}
		{MX} \sum_{ n \shskip \equiv \shskip \overline{r}_1 p_2- \overline{r}_2 p_1      (\mod p_1p_2) }   \CalL\left(\frac{MXn}{p_1p_2}; r_1,r_2,p_1,p_2\right). 
	\end{align}
	
	\subsection{Analysis of the integral $\CalL (x)$} This section is dedicated to the analysis of the integral  $\CalL (x)$ as defined in \eqref{5eq: I integral} or \eqref{5eq: I = triple integral}. 
	
	By applying Lemma \ref{lem: bound for J} to the integral in \eqref{5eq: I integral}, we obtain the following (trivial) estimate.
	
	\begin{lem} \label{lem: bound for I, 0}
		We have
		\begin{align}\label{5eq: bound for I, 02}
			\CalL (x) \Lt \frac  1    {T } .
		\end{align}
	\end{lem}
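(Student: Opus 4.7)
\textbf{Proof plan for Lemma \ref{lem: bound for I, 0}.} The plan is to simply insert the pointwise bound on $\CalJ$ from Lemma \ref{lem: bound for J} into the defining integral \eqref{5eq: I integral} for $\CalL(x)$ and estimate everything else trivially. Since there is no real cancellation being exploited here, this is the ``trivial" estimate advertised in the statement.

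More precisely, recall that
\begin{equation*}
\CalL(x) = \int_0^\infty U(y)\,\CalJ(MXy, r_1, p_1)\,\overline{\CalJ(MXy, r_2, p_2)}\,e(-xy)\,\mathrm{d}y,
\end{equation*}
where $U \in C_c^\infty(0,\infty)$ is supported in $[1,2]$. Thus in the range of integration we have $1 \leqslant MXy / (MX) \leqslant 2$, so Lemma \ref{lem: bound for J} applies to each factor and yields
\begin{equation*}
\bigl|\CalJ(MXy, r_1, p_1)\bigr| \Lt \frac{1}{\sqrt{T}}, \qquad \bigl|\CalJ(MXy, r_2, p_2)\bigr| \Lt \frac{1}{\sqrt{T}}.
\end{equation*}
Taking absolute values inside the integral, discarding the unit modulus factor $e(-xy)$, and using that $\int_0^\infty |U(y)|\,\mathrm{d}y \Lt 1$, we obtain
\begin{equation*}
\bigl|\CalL(x)\bigr| \leqslant \int_0^\infty |U(y)|\cdot \bigl|\CalJ(MXy, r_1, p_1)\bigr|\cdot\bigl|\CalJ(MXy, r_2, p_2)\bigr|\,\mathrm{d}y \Lt \frac{1}{T},
\end{equation*}
as claimed. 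There is essentially no obstacle here; the only mild point to check is that the support condition on $U$ puts us inside the regime $y \asymp 1$ where the bound of Lemma \ref{lem: bound for J} is valid (this uses $|\phi''|\Gt 1$ and the assumption $K < T^{1-\sepsilon}$ from \eqref{5eq: XN=P2K2}, which were already invoked there). Sharper bounds exploiting oscillation in the $y$-integral or the $\varv_1, \varv_2$-integrals of \eqref{5eq: I = triple integral} are presumably addressed in subsequent lemmas; here we only need the crudest estimate.
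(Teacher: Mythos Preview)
Your proof is correct and follows exactly the same approach as the paper, which simply says ``By applying Lemma \ref{lem: bound for J} to the integral in \eqref{5eq: I integral}, we obtain the following (trivial) estimate.'' You have merely written out the one-line argument in full detail.
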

	
	Further, we wish to improve the estimate above by examining the triple integral in \eqref{5eq: I = triple integral}.
	
	We first investigate  in the lemma below the $y$-integral in the second line of {\rm\eqref{5eq: I = triple integral}}; it is the integral $\CalK  (\varw K, x  )$ defined as in \eqref{5eq: the integral K}, with $\varw = \sqrt{ \varv_1 } P /{p_1}  -  \hskip -1.5pt \sqrt{ \varv_2 } P/ {p_2}$. 
	
	\begin{lem}\label{lem: the integral K}
		Let $K > N^{\sepsilon} > 1$.		For real $\varw, x$, with $|\varw| \leqslant \hskip -1pt \sqrt 2 - 1/2 < 1$,  define 
		\begin{align}\label{5eq: the integral K}
			\CalK  (\varw K, x ) = \int_{0 }^{\infty} U (y)  e \hskip -1pt \left(2  \varw K \hskip -1pt \sqrt { y} - x y \right) \mathrm{d}y .
		\end{align} 
		
		{\rm(1).} We have $\CalK  (\varw K, x ) = O (N^{-A})$ if $ |x| \geqslant K $. 
		
		{\rm(2).}  For $ |x| > N^{\sepsilon}$, we have $\CalK  ( \varw K, x ) = O (N^{-A})$ unless $   2/3 < \varw K / x < \hskip -1pt 3/2 $, say, and for $  1/  2 < \varw K / x < 2  $, if we let $\lambdaup = K^2 \varw^2 / x$ and $ W(\lambdaup) = W (\lambdaup, x) = e (- \lambdaup) \CalK  \big( \hskip -1.5pt \sqrt{\lambdaup x}, x  \big) $ then
		\begin{align}\label{5eq: bounds for U}
			\lambdaup^{j} W^{(j)} (\lambdaup) 
			\Lt_{\, j}   1 / {\hskip -1pt {\textstyle \sqrt {|x|}} }. 
		\end{align}

		{\rm(3).} $ \CalK  (\varw K, 0 ) = W_0 (2 \varw K ) $ for some Schwartz function $W_0 $.
		
	\end{lem}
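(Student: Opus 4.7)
The phase is $\psi(y) = 2 \varw K \sqrt{y} - x y$, with derivatives $\psi'(y) = \varw K / \sqrt{y} - x$ and $\psi''(y) = - \varw K / (2 y^{3/2})$; the (possibly virtual) stationary point is $y_0 = (\varw K / x)^2$. My plan is to apply the stationary/non-stationary phase toolkit of Lemma~\ref{lem: staionary phase, dim 1, 2} in each of the three regimes, after an appropriate lower bound on $|\psi'|$ or an explicit computation at $y_0$.

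For (1), when $|x| \geqslant K$ and $|\varw| \leqslant \sqrt 2 - 1/2$, the bound $|\varw K/\sqrt{y}| \leqslant (\sqrt 2 - 1/2) K$ for $y \in [1,2]$ forces $|\psi'(y)| \geqslant (3/2 - \sqrt 2) K$ uniformly on the support, while $|\psi''(y)|/|\psi'(y)|^2 = O(1/K)$. Repeated integration by parts (equivalently, the non-stationary part of Lemma~\ref{lem: staionary phase, dim 1, 2}) yields $\CalK(\varw K, x) = O(K^{-A})$, which is $O(N^{-A})$ after renaming $A$ since $K > N^{\sepsilon}$.

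For (2), when $\varw K / x \notin (2/3, 3/2)$ (including the opposite-sign cases) the two summands of $\psi'$ cannot cancel on $y \in [1,2]$: the worst cases $\sqrt{y} = 1$ for $\varw K / x \leqslant 2/3$ and $\sqrt{y} = \sqrt 2$ for $\varw K / x \geqslant 3/2$ both give $|\psi'(y)| \gtrsim |x|$, so $|x| > N^{\sepsilon}$ and integration by parts produce negligibility. In the genuine stationary range $1/2 < \varw K / x < 2$ one has $y_0 = (\varw K / x)^2 \in (1/4, 4)$, $\psi''(y_0) = -x^3/(2 (\varw K)^2) \asymp -x$, and $\psi(y_0) = (\varw K)^2 / x = \lambdaup$, so the twist by $e(-\lambdaup)$ removes precisely the leading phase and leaves
\[
W(\lambdaup) = \int_0^{\infty} U(y)\, e \bigl(2 \sqrt{\lambdaup x y} - x y - \lambdaup \bigr)\, \mathrm{d} y .
\]
The second-derivative test in Lemma~\ref{lem: staionary phase, dim 1, 2} gives the $1/\sqrt{|x|}$ bound at $j = 0$. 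For $j \geqslant 1$ I would differentiate under the integral: each $\partial_{\lambdaup}$ produces the factor $2 \pi i (\sqrt{x y / \lambdaup} - 1) = 2 \pi i (x y - \lambdaup) / (\sqrt{\lambdaup} ( \sqrt{x y} + \sqrt{\lambdaup} ))$, which vanishes to first order at $y_0 = \lambdaup / x$ with size $\asymp |y - y_0|/\sqrt{\lambdaup}$ nearby; the stationary-phase principle for amplitudes vanishing to order $k$ at $y_0$ gains an additional factor $|\psi''(y_0)|^{-k/2} \asymp |x|^{-k/2}$, so each $\lambdaup$-derivative saves $1/(\lambdaup \sqrt{|x|})$ and the $\lambdaup^j$ prefactor cancels the $j$ copies of $1/\lambdaup$ to yield $\lambdaup^j W^{(j)}(\lambdaup) \Lt_j 1/\sqrt{|x|}$.

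For (3), the substitution $u = \sqrt{y}$ (so $\mathrm{d} y = 2 u\, \mathrm{d} u$) identifies $\CalK(\varw K, 0)$ with $\int_0^{\infty} 2 u\, U(u^2)\, e(2 \varw K u)\, \mathrm{d} u$, the Fourier transform of the smooth, compactly supported function $u \mapsto 2 u\, U(u^2)$ evaluated at $-2 \varw K$; this Fourier transform is Schwartz, so one takes $W_0(t) := \int_0^{\infty} 2 u\, U(u^2)\, e(t u)\, \mathrm{d} u$. The main technical obstacle is the derivative bookkeeping in (2): the twist $e(-\lambdaup)$ is essential to kill the leading stationary-phase phase, and verifying that each $\partial_{\lambdaup}$ saves exactly $1/(\lambdaup \sqrt{|x|})$ relies on tracking the first-order vanishing of $\sqrt{x y/\lambdaup} - 1$ at $y_0$ together with the standard amplitude-vanishing stationary-phase estimate.
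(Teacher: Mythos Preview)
Your arguments for (1), (3), and the non-stationary half of (2) are correct and essentially match the paper. One minor mis-citation: for the $j=0$ bound in (2) you invoke Lemma~\ref{lem: staionary phase, dim 1, 2}, which is the integration-by-parts lemma; the second-derivative test is Lemma~\ref{lem: derivative tests, dim 1}.

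The derivative bounds \eqref{5eq: bounds for U} have a genuine gap. Your plan is to differentiate under the integral sign and bound each resulting term by stationary phase, but this fails for $j\geqslant 2$ because the stationary point $y_0=\lambdaup/x$ \emph{moves} with $\lambdaup$. Concretely, writing $g(y,\lambdaup)=2\sqrt{\lambdaup x y}-xy-\lambdaup$, one has $\partial_\lambdaup^2 g(y_0,\lambdaup)=-1/(2\lambdaup)\neq 0$, so the amplitude in $W''=\int U(y)\bigl[(2\pi i)^2(\partial_\lambdaup g)^2+2\pi i\,\partial_\lambdaup^2 g\bigr]e(g)\,\mathrm{d}y$ does \emph{not} vanish at $y_0$; the leading stationary-phase term is then of size $(1/\lambdaup)\cdot|g_{yy}(y_0)|^{-1/2}\asymp\lambdaup^{-3/2}$, yielding only $\lambdaup^2 W''\Lt\lambdaup^{1/2}$. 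The true size $\lambdaup^{-5/2}$ comes from an exact cancellation between this term and the next stationary-phase contribution of the order-$2$-vanishing piece $(\partial_\lambdaup g)^2$---a cancellation your term-by-term bounding cannot see. (Your intermediate size estimate $\partial_\lambdaup g\asymp|y-y_0|/\sqrt{\lambdaup}$ is also off: since $x/\lambdaup\asymp 1$ one has $\partial_\lambdaup g\approx\tfrac{x}{2\lambdaup}(y-y_0)\asymp|y-y_0|$.) The paper avoids all of this by first substituting $y\to\lambdaup y/x$, which pins the stationary point at $y=1$ and makes the phase exactly $-\lambdaup(\sqrt{y}-1)^2$, \emph{linear} in $\lambdaup$; the amplitude $(\lambdaup/x)U(\lambdaup y/x)$ then satisfies $\lambdaup^j\partial_y^i\partial_\lambdaup^j(\text{amp})\Lt 1$, and Lemma~\ref{lem: stationary phase estimates, dim 1} delivers \eqref{5eq: bounds for U} in one stroke.
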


	\begin{proof}
		The statements in (1) and the first part of (2) follow from Lemma \ref{lem: staionary phase, dim 1, 2}. (3) is also clear, for $ W_0 $ is the inverse Fourier transform of the function given by $2 y U (y^2)$ for $y > 0$ and by $0$ for $y \leqslant 0$. It is left to prove \eqref{5eq: bounds for U} for $ 1/4 < \lambdaup   / x < 4 $. For this, we change the variable $y$ to $\lambdaup y / x =  \varw^2 K^2 y / x^2 $ in \eqref{5eq: the integral K} so that 
		\begin{align*}
			W (\lambdaup, x) = \frac {\lambdaup} {x} \int_0^{\infty} U (\lambdaup y/x) e \lp - \lambdaup  ( 1 - 2 \sqrt y + y  ) \rp \nd y. 
		\end{align*}
		Then the estimates in \eqref{5eq: bounds for U} follow from Lemma \ref{lem: stationary phase estimates, dim 1}. 
	\end{proof}
	
	\begin{lem}\label{lem: the integral I}
		Let $N, T, K, P > 1$ be parameters with $N^{\sepsilon} < K \Lt T$ and $ N^{1+\sepsilon} < P K$. Let $p_i \sim P$ and $|r_i - \gamma p_i| \Lt PT/N$ {\rm(}$i = 1, 2${\rm)}. Suppose that  $  \phi^{(j)} (\varv) \Lt  1 $ for $j = 2, 3$ and that $ |\phi'' (\varv)| \Gt 1 $ for all $\varv \in (1/2, 5/2)$. Let the integral  $\CalL (x)$ be  as in {\rm\eqref{5eq: I = triple integral}}. 
		
		{\rm (1).} We have $\CalL (x) = O (N^{-A})$ if $ |x| \geqslant K  $. 
		
		{\rm(2).} Assume that $K^2 / T > N^{\sepsilon}$. For  $ K^2 / T  \Lt |x| < K $, we have
		\begin{align}\label{5eq: bound for I(x)}
			\CalL (x) \Lt  \frac {1}  {\textstyle T \hskip -1pt \sqrt  {|x|}   } .
		\end{align}
		{For $ |x| \Lt K^2 / T $, we have 
			\begin{align}\label{5eq: bound for I(x), 2}
				\CalL (x) \Lt \frac  {1}   T.
		\end{align} }
		
		{\rm(3).} Let $p_1 = p_2 = p$.  Then
		\begin{align}\label{5eq: bound for I(0)}
			\CalL (0) \Lt   \min \left\{  \frac 1 T, \frac {P N^{\sepsilon} } {KN |r_1-r_2|} \right\}.
		\end{align}
	\end{lem}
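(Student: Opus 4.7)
My plan is to treat the three parts in sequence, handling the inner $y$-integral $\CalK(\varw K, x)$ in \eqref{5eq: I = triple integral} via Lemma \ref{lem: the integral K}. Part (1) is essentially immediate: with $v_1, v_2 \in [1, 2]$ and $p_1, p_2 \sim P$, one has $|\varw| = |P\sqrt{v_1}/p_1 - P\sqrt{v_2}/p_2| \leqslant \sqrt 2 - 1/2 < 1$, so Lemma \ref{lem: the integral K}(1) forces $\CalK(\varw K, x) = O(N^{-A})$ whenever $|x| \geqslant K$, uniformly in $(v_1, v_2)$, and $\CalL(x) = O(N^{-A})$ follows at once upon integrating against the bounded $V_{\scriptscriptstyle \natural}$-amplitudes.

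For part (2), the bound $\CalL(x) \Lt 1/T$ for $|x| \Lt K^2/T$ is already furnished by Lemma \ref{lem: bound for I, 0}, so only \eqref{5eq: bound for I(x)} requires new work. In the range $K^2/T \Lt |x| < K$, I will use Lemma \ref{lem: the integral K}(2) to write $\CalK(\varw K, x) = e(\lambdaup) W(\lambdaup) + O(N^{-A})$, with $\lambdaup = K^2 \varw^2/x$ and $\|W\|_\infty \Lt 1/\hskip -1pt \sqrt{|x|}$, effectively localizing to $\varw K/x \sim 1$. The resulting double oscillatory integral has phase
\[
\Phi(v_1, v_2) = T(\phi(v_1) - \phi(v_2)) + \gamma N(v_1 - v_2) - \frac{Nr_1 v_1}{p_1} + \frac{Nr_2 v_2}{p_2} + \lambdaup,
\]
for which a direct computation gives $\partial^2_{v_1}\Phi = T\phi''(v_1) + O(K^2/|x|)$; since $|x| \Gt K^2/T$ and $|\phi''| \Gt 1$, the first term dominates, so $|\partial^2_{v_1}\Phi| \Gt T$, and analogously for $v_2$. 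Two applications of the second derivative test (Lemma \ref{lem: derivative tests, dim 1}), one in $v_1$ and one in $v_2$, each produce a factor $1/\hskip -1pt \sqrt T$; combined with the amplitude bound $\Lt 1/\hskip -1pt \sqrt{|x|}$ this yields $\CalL(x) \Lt 1/(T\hskip -1pt \sqrt{|x|})$.

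For part (3), with $p_1 = p_2 = p$ and $x = 0$, Lemma \ref{lem: the integral K}(3) gives $\CalK(\varw K, 0) = W_0(2\varw K)$ with $W_0$ Schwartz and $\varw = P(\sqrt{v_1} - \sqrt{v_2})/p$, so one may restrict to $|v_1 - v_2| \Lt N^\sepsilon/K$ at negligible cost. The $1/T$ bound follows by the same double stationary phase argument as in part (2) (now with no $\lambdaup$-contribution, so $\partial^2_{v_i}\Phi_0 \sim T\phi''$). For the second bound, I will change variables $u = v_1 - v_2$, $v = v_2$ and integrate by parts once in $v$; computing
\[
\partial_v \Phi_0 = -\frac{N(r_1-r_2)}{p} + T\bigl(\phi'(v+u) - \phi'(v)\bigr) = -\frac{N(r_1-r_2)}{p} + O(TN^\sepsilon/K),
\]
shows that when $|r_1 - r_2|$ is large enough for the first term to dominate, a single integration by parts gains a factor $P/(N|r_1-r_2|)$; multiplying by the effective $u$-support length $N^\sepsilon/K$ gives $\CalL(0) \Lt P N^\sepsilon/(KN|r_1-r_2|)$. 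In the complementary regime that bound is itself $\geqslant 1/T$, so the $\min$ is realized by the already-established $1/T$.

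The main technical delicacy will be in part (2): one must carefully control the $v_i$-variation of the amplitude $W(\lambdaup)$ inside the second derivative test (using $|\partial_{v_i}\lambdaup| \Lt K$ together with the derivative bounds $\lambdaup^j W^{(j)} \Lt 1/\hskip -1pt \sqrt{|x|}$ from Lemma \ref{lem: the integral K}(2)), and verify that the threshold $|x| \Gt K^2/T$ is exactly what keeps the $T\phi''$ term in $\partial^2_{v_i}\Phi$ dominant over the $O(K^2/|x|)$ contribution of $\lambdaup$.
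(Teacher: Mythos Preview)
Your treatment of part (1), the small-$|x|$ case of part (2), and the second bound in part (3) is essentially correct and matches the paper's argument (the paper uses the change of variables $(\varw,v_2)$ rather than your $(u,v)$, but the computations are equivalent).

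There is, however, a genuine gap in your plan for \eqref{5eq: bound for I(x)}. You propose to apply the one-dimensional second derivative test (Lemma~\ref{lem: derivative tests, dim 1}) successively in $v_1$ and then in $v_2$, claiming each application yields a factor $1/\sqrt{T}$. This does not work: once you apply the bound in $v_1$, you obtain only $|I(v_2)|\Lt \mathrm{Var}_{v_1}(A)/\sqrt{T}$, a \emph{magnitude} estimate, not an oscillatory integral. Integrating this over $v_2$ gives at best $\CalL(x)\Lt 1/(\sqrt{T}\sqrt{|x|})$, one factor of $\sqrt{T}$ short. The phase $\Phi$ is genuinely non-separable through the term $\lambdaup=K^2\varw^2/x$, and the amplitude $W(\lambdaup)$ couples $v_1,v_2$ as well, so the double integral does not factor.

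The paper instead applies the \emph{two}-dimensional second derivative test (Lemma~\ref{lem: 2nd derivative test, dim 2}), for which one must additionally verify the mixed partial and the Hessian determinant. The coupling term contributes $\partial^2\lambdaup/\partial v_1\partial v_2 \asymp K^2/|x|$, and the key point---which you correctly anticipate as the role of the threshold $|x|\Gt K^2/T$---is precisely that this makes the off-diagonal entry $\Lt T$, whence $|\det f''|\Gt T^2$. To apply Lemma~\ref{lem: 2nd derivative test, dim 2} cleanly, the paper first flattens the amplitude $W_{\scriptscriptstyle\natural}(\varw K/x)$ by Fourier inversion, inserting $\int \widehat{W}_{\scriptscriptstyle\natural}(\varv)e(\varv\,\varw K/x)\,\mathrm d\varv$ and truncating to $|\varv|\leqslant N^{\sepsilon}$; the extra linear-in-$\sqrt{v_i}$ terms this creates are harmless in the Hessian. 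You should replace the iterated one-dimensional argument by this two-dimensional one.

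A minor related point: for the $1/T$ bound in part (3) you again invoke ``the same double stationary phase argument''. The safe route (and the paper's) is simply to quote Lemma~\ref{lem: bound for I, 0}, which bounds each $\CalJ$-factor by $1/\sqrt{T}$ \emph{before} performing the $y$-integral; the product then gives $1/T$ directly.
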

	
	\begin{proof}
		The statement in (1) is obvious in view of Lemma \ref{lem: the integral K} (1). 
		
		We then turn to the proof of (2) in the first case when $ K^2 / T  \Lt |x| < K $.
		First of all, by Lemma \ref{lem: the integral K} (2), we may write the integral in  {\rm\eqref{5eq: I = triple integral}} as below,
		\begin{align*}
			\CalL (x) = \frac 1 {  \sqrt {|x|} } \iint V_{\scriptscriptstyle \natural} (\varv_1)  \overline{V_{\scriptscriptstyle \natural}  (\varv_2)} W_{\scriptscriptstyle \natural} (\varw K/x) e  ( f (\varv_1, \varv_2) )   \shskip \mathrm{d} \varv_1 \shskip \mathrm{d} \varv_2 + O \big(N^{-A} \big) ,
		\end{align*}
		where $\varw  = \hskip -1pt \sqrt{ \varv_1 } P /{p_1}  -  \hskip -1pt \sqrt{ \varv_2 } P/ {p_2}$,  $W_{\scriptscriptstyle \natural} (y) = \sqrt {|x|} W (x y^2) F (y)$ for $W$ defined as in Lemma \ref{lem: the integral K} (2) and $F $ a smooth function  supported in $[1/2, 2]$, with $F \equiv 1$ on $\big[2/3, 3/2 \big]$, and  
		\begin{align*}
			f ( \varv_1, \varv_2 ) \hskip -1pt = \hskip -1pt T \big( \phi (\varv_1)  - \phi (\varv_2) \big) \hskip -1pt & +   \gamma N ( \varv_1\hskip -1pt - \hskip -1pt  \varv_2) \hskip -1pt - \hskip -1pt N \left(  \frac{ r_1\varv_1}{p_1} \hskip -1pt - \hskip -1pt \frac{ r_2\varv_2}{p_2}\right) \hskip -1pt \\
			& + \hskip -1pt \frac {K^2 P^2 } {x } \bigg(  \frac {\varv_1} {p_1^2} \hskip -1pt + \hskip -1pt \frac {\varv_2} {p_2^2} \bigg) \hskip -1pt - \hskip -1pt \frac {2K^2 P^2 \hskip -1pt \sqrt { \varv_1 \varv_2} } {x p_1 p_2} .
		\end{align*}
		In view of \eqref{5eq: bounds for U}, we have
		\begin{align*}
			W_{\scriptscriptstyle \natural}^{(j)} (y) \Lt_{\, j} 1. 
		\end{align*} 
		By Fourier inversion, we write
		\begin{align*}
			W_{\scriptscriptstyle \natural} (y) = \int_{-\infty}^{\infty} \widehat{W}_{\scriptscriptstyle \natural} (\varv) e (\varv y ) \mathrm{d} \varv, 
		\end{align*}
		where $\widehat{W}_{\scriptscriptstyle \natural} $ is the Fourier transform of $ W_{\scriptscriptstyle \natural} $, satisfying
		\begin{align*}
			\widehat{W}_{\scriptscriptstyle \natural} (\varv) \Lt (1 +|\varv|)^{-A}.
		\end{align*} 
		Thus, we may further write 
		\begin{align*}
			\CalL (x) = \frac 1 {\sqrt {|x|} } \int_{-N^{\ssepsilon}}^{N^{\ssepsilon}}  
			\widehat{W}_{\scriptscriptstyle \natural} (\varv)  \iint V_{\scriptscriptstyle \natural} (\varv_1)  \overline{V_{\scriptscriptstyle \natural}  (\varv_2)} e  ( f (\varv_1, \varv_2; \varv) )   \shskip \mathrm{d} \varv_1 \shskip \mathrm{d} \varv_2 \shskip \mathrm{d} \varv + O \big(N^{-A} \big),
		\end{align*}
		with 
		\begin{align*}
			f (\varv_1, \varv_2; \varv) = f (\varv_1, \varv_2) + \frac {K P \varv} {x} \lp \frac {\sqrt{\varv_1}} {p_1 } - \frac {\sqrt{\varv_2}} {p_2 } \rp . 
		\end{align*}
		We have
		\begin{align*}
			\partial^2 f  (\varv_1, \varv_2; \varv)  /\partial \varv_1^2 =   T \phi'' (\varv_1)  + \frac { K^2 P^2 \hskip -1pt \sqrt{ \varv_2}} {2 x p_1 p_2 \hskip -1pt \sqrt{\varv_1} \varv_1  } - \frac {KP \varv } {4 x p_1 \hskip -1pt \sqrt{\varv_1} \varv_1 },
		\end{align*}
		\begin{align*}
			\partial^2 f  (\varv_1, \varv_2; \varv)  /\partial \varv_2^2 = -   T \phi'' (\varv_2)  + \frac { K^2 P^2 \hskip -1pt \sqrt{ \varv_1}} {2 x p_1 p_2 \hskip -1pt \sqrt{\varv_2} \varv_2  } + \frac {KP \varv } {4 x p_2 \hskip -1pt \sqrt{\varv_2} \varv_2 } ,
		\end{align*}
		\begin{align*}
			\partial^2 f  (\varv_1, \varv_2; \varv) / \partial \varv_1 \partial \varv_2 =  -   \frac {K^2 P^2  } {2 x p_1 p_2 \hskip -1pt \sqrt { \varv_1 \varv_2}}. 
		\end{align*} 
		Since $ \phi'' (\varv) \Gt 1 $, when $ K^2 / |x| \Lt T $, it is clear that 
		\begin{align*}
			\big|\partial^2 f   /\partial \varv_1^2 \big|, \, \big|\partial^2 f   /\partial \varv_2^2 \big|  \Gt  T, \hskip 10pt  \big|\partial^2 f   / \partial \varv_1 \partial \varv_2 \big| \Lt K^2 /|x|, \hskip 10pt | \det f'' | \Gt T^2 
		\end{align*}
		for $1 \leqslant \varv_1, \varv_2 \leqslant 2$ and  $|\varv| \leqslant N^{\sepsilon}$. 
		We obtain the estimate in \eqref{5eq: bound for I(x)} by applying the two-dimensional second derivative test in Lemma \ref{lem: 2nd derivative test, dim 2} with $\lambdaup = \rho = T $. 
		
		In the second case in (2) when $  |x| \Lt K^2 / T $ is small, the estimate in \eqref{5eq: bound for I(x), 2} is just \eqref{5eq: bound for I, 02} in Lemma \ref{lem: bound for I, 0}.
		
		
		
		Finally, let us consider (3). The bound $ \CalL (0) \Lt 1/ T $ is already contained in \eqref{5eq: bound for I(x), 2}. 
		Now assume that $|r_1 - r_2| > P T / K N^{1-\sepsilon}$. In view of Lemma  \ref{lem: the integral K} (3), we may write
		\begin{align*}
			\CalL (0) =  \int_{-N^{\ssepsilon} / K}^{N^{\ssepsilon} / K} W_0    (2\varw K) \int_1^2  V_0    (\varw, \varv_2)  e  ( f_0 (\varw, \varv_2)  )    \shskip \mathrm{d} \varv_2 \shskip \mathrm{d} \varw + O \big(N^{-A} \big) ,
		\end{align*}
		where $V_0    (\varw, \varv_2) = (2p/P) \big( p\varw / P + \hskip -1pt \sqrt {\varv_2} \big)  V_{\scriptscriptstyle \natural} \big(\big( p\varw / P + \hskip -1pt \sqrt {\varv_2} \big)^2 \big)  \overline{V_{\scriptscriptstyle \natural}  (\varv_2)} $, satisfying 
		\begin{align*}
			\mathrm{Var} (V_0 (\varw, \cdot) ) = \int_1^2 |\partial V_0 (\varw, \varv_2) / \partial \varv_2 | \shskip \mathrm{d} \varv_2 \Lt 1,
		\end{align*}
		and
		\begin{align*}
			f_0 (\varw, \varv_2) = T \big(   \phi \big( (   {p \varw} / {P} + \hskip -1pt \sqrt {\varv_2} )^{2} \big) - \phi (\varv_2)  \big) - \frac {N} {p }    (  r_1    -    r_2) \varv_2 & \\
			-    \frac {2 N (r_1 - \gamma p) } { P  }  \hskip -1pt \sqrt {\varv_2} \varw     
			-    \frac {N  p (r_1-\gamma p)} {P^2}   \varw^2  \hskip -1pt & .
		\end{align*}
		Recall that $\phi^{(j)} (\varv) \Lt  1$ ($j = 2, 3$). For $  | r_1 -\gamma p|   \Lt P T /N$ and $|\varw| < N^{\sepsilon} / K$, we have
		\begin{align*}
			\partial f_0 ( \varw, \varv_2 ) /\partial \varv_2 & = - \frac {N} {p} (r_1-r_2) - \frac { N (r_1 - \gamma p) } { P  }  \frac {\varw} {\hskip -1pt \sqrt {\varv_2}} \\
			& \quad \,  + \frac {    T } { { \sqrt {\varv_2}} } \big(  \phi' \big( (   {p \varw} / {P} + \hskip -1pt \sqrt {\varv_2} )^{2} \big)    - \sqrt {\varv_2} \phi' (\varv_2) \big) \\ 
			& = - \frac {N} {p} (r_1-r_2) + O \lp\frac {T N^{\sepsilon} } {K} \rp ,
		\end{align*}
		and, similarly,
		\begin{align*}
			\partial^2 f_0 ( \varw, \varv_2 ) /\partial \varv_2^2 = O (   {T N^{\sepsilon} } / {K} ). 
		\end{align*}
		It follows from $|r_1 - r_2| > P T / K N^{1-\sepsilon}$ that  $ |\partial f_0 ( \varw, \varv_2 ) /\partial \varv_2| \Gt N |r_2-r_1| / P  $. By partial integration (the first derivative test), we infer that $ \CalL (0) \Lt P / K N^{1-\sepsilon} |r_1-r_2| $, as desired.
	\end{proof}
	
	\subsection{Estimates for ${S}(N,   X,   P)$}   
	
	Combining \eqref{afterCauchy-Schwarz}, \eqref{n sum after poisson} and \eqref{5eq: n sum after Poisson, 2}, along with Lemma \ref{lem: the integral I}, we conclude that
	\begin{align}\label{5eq: S<Sdiag+Soff}
		{S}(N,   X,   P)  \Lt_{M} \textstyle \hskip -1pt \sqrt {  {S}_{\mathrm{diag}}^2 (N,   X,   P)} + \hskip -1pt \sqrt {  {S}_{\mathrm{off}}^2 (N,   X,   P)} + N^{-A},
	\end{align}
	with
	\begin{equation}\label{S(diag)}
		\begin{split}
			{S}_{\mathrm{diag}}^2(N, X, P) = \frac{N^{3 } X }{P^{\star 2}  P^2 K }  
			\sum_{p \shskip \sim P}
			\underset{\sstyle  (r_1 r_2,\shskip p)=1 \atop {\sstyle  |r_1 - \gamma p|, \shskip |r_2 - \gamma p| \Lt R \atop {\sstyle r_1\equiv \, r_2 (\mod p) } }}{\sum \sum}  
			\min \left\{  \frac 1 T, \frac {P N^{\sepsilon} } {KN |r_1-r_2|} \right\},
		\end{split}
	\end{equation} 
	and 
	\begin{equation}\label{S(off)}
		\begin{split}
			{S}_{\mathrm{off}}^2(N, X, P) & 
			=  \frac{N^{3 } X }{ P^{\star 2}  P^2    K  } \hskip -1pt \underset{p_1,\shskip p_2 \shskip \sim P}{\sum \sum} 
			\underset{\sstyle  (r_i ,\shskip p_i)=1 \atop {\sstyle |r_i - \gamma p_i|  \Lt R}}{\sum \sum} 
			\Bigg( \mathop{\sum_{ N/T \Lt |n| \Lt N/K }}_{n \shskip \equiv \, \overline{r}_1 p_2- \overline{r}_2 p_1      (\mod p_1p_2) } \frac {\sqrt{p_1 p_2}} {  T \hskip -1pt \sqrt{  X |n|}   } \\
			& \hskip 140pt + \mathop{\sum_{ 0 < |n| \Lt N/T }}_{n \shskip \equiv \, \overline{r}_1 p_2- \overline{r}_2 p_1      (\mod p_1p_2) } \frac 1 T \Bigg),
	\end{split}\end{equation}
	in correspondence to the cases where $n = 0$ and $n\neq 0$ in \eqref{5eq: n sum after Poisson, 2}, respectively. Since the modular form $g$ is considered fixed, we have absorbed its level $M$ into the implied constant.
	Note that in the case $n = 0$  the congruence condition in \eqref{5eq: congruence condition} would imply $p_1 = p_2$ ($=p$) and $r_1\equiv r_2 (\mod p)$. Moreover, when applying the estimates \eqref{5eq: bound for I(x)} and \eqref{5eq: bound for I(x), 2}  to $ \CalL (x) $ with $x = {MXn}/{p_1p_2}$, note that $ K^2 / T  \Lt |x| < K $ or $ |x| \Lt K^2 / T $ amounts to $ N/T \Lt |n| \Lt N/K $ or $ |n| \Lt N/T $, respectively, for $X = P^2 K^2/N$ (see \eqref{5eq: XN=P2K2}).  We record here the condition  in Lemma \ref{lem: the integral I} (2):
	\begin{align}
		\label{5eq: 2nd condtion on K}
		K > \sqrt T N^{ \sepsilon}.
	\end{align} 
	
	For $ {S}_{\mathrm{diag}}^2(N, X, P)$, we split the sum over $r_1$ and $r_2$ according as $r_1 = r_2$ or not, 
	\begin{equation*}
		{S}_{\mathrm{diag}}^2(N, X, P) = \frac{N^{3 } X }{P^{\star 2}  P^2 K }  \bigg(
		\sum_{p \shskip \sim P}
		\underset{\sstyle (r ,\shskip p)=1  \atop {\sstyle  |r -\gamma p| \Lt R }}{\sum}  \frac 1 T +  
		\sum_{p \shskip \sim P}
		\underset{\sstyle  (r_1 r_2,\shskip p)=1  \atop {\sstyle |r_1 - \gamma p|, \shskip |r_2 - \gamma p| \Lt R \atop {\sstyle r_1\equiv \, r_2 (\mod p) \atop {\sstyle r_1 \neq r_2} } }}{\sum \sum} \frac {P N^{\sepsilon} } {KN |r_1-r_2|} \bigg) ,
	\end{equation*}
	and hence
	\begin{align}\label{5eq: estimate for S diag}
		{S}_{\mathrm{diag}}^2(N, X, P) \Lt \frac{N^{3 } X }{P^{\star 2}  P^2 K } \lp \frac {P^{\star} R} T + P^{\star} R \frac { N^{\sepsilon}} {KN} \rp \Lt ( K N +  T N^{\sepsilon}) \log P. 
	\end{align}
	Recall here that $NX = P^2 K^2$ and $R = PT/N$ as in \eqref{5eq: XN=P2K2} and \eqref{5eq: R = Pt/N}.
	
	To deal with $ {S}_{\mathrm{off}}^2(N, X, P)$, we first note that necessarily $p_1\neq p_2$. Otherwise, if $p_1=p_2=p$, then the congruence $n \shskip \equiv \, \overline{r}_1 p - \overline{r}_2 p \,      (\mod p^2)$ would imply $p|n$. This is impossible, in view of our assumption $  {N^{1+\sepsilon}} / {K} < P$ in \eqref{5eq: assumption on K} and the length $  N /K$ of the $n$-sum. 
	We now interchange the sum over $n$ and the sums over $r_1$, $r_2$. Note that  for fixed $n$, the congruence  $n \equiv \overline{r}_1 p_2- \overline{r}_2 p_1       (\mod p_1p_2)$ splits into  $r_1\equiv \overline{n}  p_2  (\mod p_1)$ and $r_2\equiv -\overline{n}  p_1  (\mod p_2)$, so
	\begin{align*}
		{S}_{\mathrm{off}}^2(N, X, P) = \frac{N^{3 }X  }{ P^{\star 2}  P^2    K  } \hskip -1pt \underset{\sstyle p_1,\shskip p_2 \shskip \sim P \atop {\sstyle p_1 \neq p_2} }{\sum \sum} \Bigg( 
		{\sum_{ N/T \Lt |n| \Lt N/K }}  & \underset{\sstyle |r_1 - \gamma p_1|, \shskip |r_2 - \gamma p_2| \Lt R \atop {\sstyle r_1\equiv \shskip \overline{n}  p_2  (\mod p_1) \atop{\sstyle r_2\equiv -\overline{n}  p_1  (\mod p_2) } }}{\sum \sum} \frac {\sqrt{p_1 p_2}} {  T \hskip -1pt \sqrt{X |n|}   } \\
		+ & {\sum_{ 0 < |n| \Lt N/T }}  \underset{\sstyle |r_1 - \gamma p_1|, \shskip |r_2 - \gamma p_2| \Lt R \atop {\sstyle r_1\equiv \shskip \overline{n}  p_2  (\mod p_1) \atop{\sstyle r_2\equiv -\overline{n}  p_1  (\mod p_2) } }}{\sum \sum} \frac 1 T  \Bigg).
	\end{align*}
	When $T \geqslant N$ so that $R \geqslant P$, we have
	\begin{equation}\label{5eq: estimate for S off, 1}
		\begin{split}
			{S}_{\mathrm{off}}^2(N, X, P)   \Lt \frac{N^{3 } X }{ P^{\star 2}  P^2   K } P^{\star 2}     \lp \frac {P} {T \sqrt X} \sqrt {\frac N K} + \frac {N} {T^2} \rp \lp \frac {R}{P } \rp^2 =   \frac {N T} {\sqrt K} + K N    . 
		\end{split}
	\end{equation}
	When $T < N$, the $(R/P)^2$ in \eqref{5eq: estimate for S off, 1} needs to be replaced by $1$. In other words, we lose $(P/R)^2 = (N/T)^2$. However, the loss may be reduced to $N/T$ if we rearrange the sum ${S}_{\mathrm{off}}^2(N, X, P)$ as follows
	\begin{align*}
		\frac{N^{3 }X  }{ P^{\star 2}  P^2    K  } \hskip -1pt \sum_{p_1   \sim P} \sum_{\sstyle (r_1, p_1) = 1 \atop {\sstyle |r_1 - \gamma p_1| \Lt R}}  \Bigg( 
		{\sum_{ N/T \Lt |n| \Lt N/K }}  & \underset{\sstyle p_2 \sim P  \atop {\sstyle p_2 \equiv    \shskip  {n}  r_1  (\mod p_1)  }}{\sum}  \sum_{\sstyle |r_2-\gamma p_2| \Lt R \atop{\sstyle r_2\equiv -\overline{n}  p_1  (\mod p_2) }}  \frac {\sqrt{p_1 p_2}} {  T \hskip -1pt \sqrt{X |n|}   } \\
		+ & {\sum_{ 0 < |n| \Lt N/T }} \underset{\sstyle p_2 \sim P  \atop {\sstyle p_2 \equiv    \shskip  {n}  r_1  (\mod p_1)  }}{\sum}  \sum_{\sstyle |r_2 - \gamma p_2| \Lt R \atop{\sstyle r_2\equiv -\overline{n}  p_1  (\mod p_2) }} \frac 1 T  \Bigg).
	\end{align*}
	Thus for $T < N$, we have
	\begin{equation}\label{5eq: estimate for S off, 2}
		\begin{split}
			{S}_{\mathrm{off}}^2(N, X, P)  \Lt \frac{N^{3 } X }{ P^{\star 2}  P^2   K } P^{\star  } R     \lp \frac {P} {T \sqrt X} \sqrt {\frac N K} + \frac {N} {T^2} \rp    \Lt \lp \frac {N T} {\sqrt K} + K N \rp  \frac {N} {T} \log P  . 
		\end{split}
	\end{equation}
	Combining \eqref{5eq: estimate for S off, 1} and \eqref{5eq: estimate for S off, 2}, we have
	\begin{equation}\label{5eq: estimate for S off, 3}
		\begin{split}
			{S}_{\mathrm{off}}^2(N, X, P) \Lt  \lp \frac {N T} {\sqrt K} + K N \rp \lp 1 + \frac {N} {T} \rp \log P. 
		\end{split}
	\end{equation}

	We conclude from \eqref{5eq: S<Sdiag+Soff}, \eqref{5eq: estimate for S diag} and \eqref{5eq: estimate for S off, 3} that
	\begin{align}\label{final-bound} 
		{S} (N, X, P) \Lt  \bigg( \hskip -1pt   \hskip -1pt \sqrt T  + \lp \sqrt {KN} + \frac{\sqrt {N T} }{K^{1/4}} \rp \bigg(  1 + \sqrt { \frac {N} {T} } \bigg) \bigg) N^{\shskip \sepsilon} .
	\end{align}
	At this point a mild assumption like $P < N^{A}$ is needed so that $\log P < N^{\sepsilon}$, where $A$ is a large fixed constant.

	\subsection{Conclusion}
	In view of \eqref{4eq: S(N)=S(N,X,P)} in Proposition \ref{key lemma} and \eqref{final-bound},   we have
	\begin{equation*}
		\begin{split}
			{S(N)}  &
			\Lt     {\sqrt {T} N^{\sepsilon}} + \lp \sqrt {KN} + \frac{\sqrt {N T} }{K^{1/4}} \rp \bigg(  1 + \sqrt { \frac {N} {T} } \bigg)  N^{\shskip \sepsilon} +\frac{  N }{K} +\frac{N \sqrt {K} }{P}.
	\end{split}\end{equation*}
	For the estimate in \eqref{1eq: main bound} to be non-trivial, we assume that $ N^{\sepsilon} < T < N^{3/2 - \sepsilon} $. Then
	\begin{equation*} 
		{S(N)}  
		\Lt  T^{1/3} N^{1/2+\sepsilon} \bigg(  1 +   \frac   {N^{1/2}} {T^{1/2}}  \bigg)    + \frac{N T^{1/3} }{P}
		\Lt \, T^{1/3} N^{1/2+\sepsilon} +     \frac {N^{1+\sepsilon}} {T^{1/6} },
	\end{equation*} 
	on choosing $K = T^{2/3}$ and $P = N/T^{1/3}$. The required conditions in \eqref{5eq: XN=P2K2}, \eqref{5eq: assumption on K} and \eqref{5eq: 2nd condtion on K} are well justified for our choice of $K$ and $P$. This proves   Theorem \ref{main-theorem}.
	
	For  Corollary \ref{main-corollary}, define
	\begin{align*}
		S_H (N) =  \sum_{N \leqslant n\leqslant N+H}\lambdaup_g(n)\,e(f(n)) .
	\end{align*} 
	Let the smooth function $V$ in  Theorem \ref{main-theorem}  be supported on $[1, 1 + H/N]$ with $V (x) \equiv 1$ on $[1+1/\varDelta, 1+H/N-1/\varDelta]$. For this, it is necessary that $ \varDelta \geqslant 2 N/H  $.  By the Deligne bound \eqref{2eq: Ramanujan, 0},  we would have
	$$S_H(N)=S(N)+O(N^{1+\sepsilon}/\varDelta).$$
	Then Corollary \ref{main-corollary} follows from Theorem \ref{main-theorem} upon choosing $\varDelta = T / N^{\sepsilon}$.

	\section{Proof of the Weyl-type subconvex bound}\label{sec: Weyl}
	
	
	For $g \in S^{\star}_k (M, \xi)$  with Fourier coefficients $\lambdaup_g (n)$, let $\widebar{g} \in S^{\star}_k (M, \widebar \xi)$ be  its dual form with Fourier coefficients $\lambdaup_{\widebar{g}} (n) = \overline{\lambdaup_{g}(n)}$, and let $\epsilon_g $ be the root number of $L(s, g)$ satisfying the functional equation 
	\begin{align*}
		\Lambda (s, g) = \epsilon_g  \Lambda (1-s, \widebar g),
	\end{align*}
	with
	$$\Lambda (s, g) = 
	M^{{s} /{2}}  (2\pi)^{-s } \Gamma \Big( s + \frac {k-1} 2 \Big) L (s, g). $$
	We remark that $\epsilon_g = i^k \eta_g$ for the $\eta_g$ as in Lemma \ref{lem: Voronoi}.  From this one may deduce the following Approximate Functional Equation (see  \cite[Theorem 2.5]{Harcos02} and \cite[Lemma 2.1]{BMN}).	
	\begin{lem}[Approximate Functional Equation]
		Let $F $ be a real-valued smooth function on $(0,\infty)$ satisfying $F(x)+F(1/x)=1$ and with derivatives decaying faster than any negative power of $x$ as $x\rightarrow \infty$. Then
		\begin{equation}\label{1eq: AFE}
			\begin{split}
				L(1/2+it,g)=\sum_{n=1}^{\infty}\frac{\lambdaup_g(n)}{n^{1/2+it}} F \hskip -1pt \left(\hskip -0.5pt \frac{n}{\sqrt{C}} \hskip -0.5pt\right)+ \epsilon_g (2\pi)^{2it}\frac{\Gamma \big(\frac{k}{2}-it\big)}{\Gamma\big(\frac{k}{2}+it \big)}\sum_{n=1}^{\infty}\frac{\overline{\lambdaup_g(n)}}{n^{1/2-it}} F \hskip -1pt\left(\hskip -0.5pt \frac{n}{\sqrt{C}} \hskip -0.5pt\right) &\\
				+O_{\sepsilon,\shskip F}\big(M^{{1}/{2}} / C^{ {1}/{4}-\sepsilon}\big) &,
			\end{split}
		\end{equation} 
		where $C=C(g,t)$ is the analytic conductor defined by {\rm(\cite[(2.4)]{Harcos02})}
		$$C=\frac{M}{4 \pi^2}\left|\frac{k }{2}+it\right|\left|\frac{k}{2}+1+it\right|.$$
	\end{lem}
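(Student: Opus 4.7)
The plan is to prove this via the standard Mellin--Barnes contour argument, where the symmetry condition $F(x)+F(1/x)=1$ plays the role of ensuring that both sums in \eqref{1eq: AFE} are weighted by the \emph{same} function $F$.

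First I would examine the Mellin transform $\tilde F(s)=\int_0^\infty F(x)x^{s-1}\mathrm{d}x$. This integral converges for $\Re\, s>0$ (since $F$ decays rapidly at $\infty$). Splitting at $x=1$, substituting $x\mapsto 1/x$ in the $(0,1)$-piece and using $F(1/x)=1-F(x)$ yields the identity
\begin{equation*}
\tilde F(s)=\frac{1}{s}+\int_1^\infty F(u)(u^{s-1}-u^{-s-1})\mathrm{d}u,
\end{equation*}
valid for $\Re\, s>0$ and providing meromorphic continuation to $\BC$ with a unique simple pole at $s=0$ of residue $1$. The displayed expression is manifestly odd in $s$, so $\tilde F(-s)=-\tilde F(s)$. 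This oddness is the analytic incarnation of the symmetry $F(x)+F(1/x)=1$ and is the main structural fact driving the proof.

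Next, for $\Re\, s>1$, Mellin inversion and absolute convergence of the $L$-series give
\begin{equation*}
\sum_{n=1}^\infty\frac{\lambdaup_g(n)}{n^{1/2+it}}F\!\left(\frac{n}{\sqrt C}\right)=\frac{1}{2\pi i}\int_{(2)}L\!\left(\tfrac12+it+s,g\right)\tilde F(s)\,C^{s/2}\,\mathrm{d}s.
\end{equation*}
I would then shift the contour to $\Re\, s=-2$. Only the simple pole of $\tilde F$ at $s=0$ is crossed, contributing the residue $L(1/2+it,g)$. On the new contour I invoke the functional equation
\begin{equation*}
L\!\left(\tfrac12+it+s,g\right)=\epsilon_g M^{-(it+s)}(2\pi)^{2it+2s}\frac{\Gamma(k/2-it-s)}{\Gamma(k/2+it-s)^{-1}}\cdot\frac{L(1/2-it-s,\widebar g)}{\Gamma(k/2+it+s)\cdot\Gamma(k/2-it-s)^{-1}},
\end{equation*}
(I will clean up the gamma quotient), expand $L(1/2-it-s,\widebar g)$ as a Dirichlet series, and change the variable $s\mapsto -s$. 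The oddness $\tilde F(-s)=-\tilde F(s)$ together with the sign from the change of variable returns $\tilde F$ and turns the contour integral into a Mellin inversion representation for $F(n/\sqrt C)$, up to the gamma quotient $\Gamma(k/2-it+s)/\Gamma(k/2+it-s)$.

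The final step is to separate the leading behaviour of this gamma ratio from its $s$-dependence via Stirling: on the shifted line one writes
\begin{equation*}
\frac{\Gamma(k/2-it+s)}{\Gamma(k/2+it-s)}=\frac{\Gamma(k/2-it)}{\Gamma(k/2+it)}\Bigl|\tfrac{k}{2}+it\Bigr|^{2s}\bigl(1+E(s,t)\bigr),
\end{equation*}
with $E(s,t)=O(|s|^2/(1+|t|))$ uniformly on compact contours. Since $C\asymp M|k/2+it|^2/(4\pi^2)$, the factor $|k/2+it|^{2s}$ combines with the remaining $s$-powers to produce exactly $(\sqrt C/n)^s$, so Mellin inversion reconstructs $F(n/\sqrt C)$ and yields the main dual sum with its gamma prefactor. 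The error term $E(s,t)$ contributes, after shifting the contour of the corrected Mellin integral to $\Re\, s=1/2-\sepsilon$ and estimating trivially using the convexity bound for $L(1/2-it-s,\widebar g)$ along with rapid decay of $\tilde F(s)$ on vertical lines, a total error of size $O_{\sepsilon,\shskip F}(M^{1/2}/C^{1/4-\sepsilon})$. The main obstacle to execute cleanly is bookkeeping the gamma-ratio approximation together with the discrepancy between $|k/2+it|^2$ and $4\pi^2 C/M$, but both are controllable by Stirling to the claimed precision.
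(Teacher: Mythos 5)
Your proposal is correct and is exactly the standard Mellin--Barnes argument; the paper itself supplies no proof of this lemma, deferring to \cite[Theorem 2.5]{Harcos02} and \cite[Lemma 2.1]{BMN}, whose proofs consist of the same steps you outline (oddness of $\tilde F$ forced by $F(x)+F(1/x)=1$, the residue at $s=0$ producing $L(1/2+it,g)$, the functional equation plus $s\mapsto -s$, and Stirling to replace the gamma ratio by $\frac{\Gamma(k/2-it)}{\Gamma(k/2+it)}\,|k/2+it|^{2s}$ with an error that the convexity bound converts into $O(M^{1/2}/C^{1/4-\sepsilon})$). The only blemishes are cosmetic: the gamma factors in your displayed functional equation are garbled (the correct quotient is $\Gamma(k/2-it-s)/\Gamma(k/2+it+s)$, which is what your later Stirling step implicitly uses), and your derivation also produces the harmless unimodular factor $M^{-it}$ that the paper's statement suppresses.
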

	
	Let $t > 1$ be large. By applying a dyadic partition of unity to the approximate functional equation \eqref{1eq: AFE}, we infer that
	\begin{equation*}
		L(1/2+it, g) \Lt   t^{\shskip \sepsilon} \lp \frac{|S(N)|}{\sqrt N } + \frac 1 {\sqrt {t \shskip} } \rp 
	\end{equation*}
	for some $N < t^{1+\sepsilon}$,  where \begin{equation*}
		\begin{split}
			S(N)=\sum_{n=1}^{\infty}\lambdaup _g(n)\,n^{-it}V\left(\frac{n}{N}\right),
	\end{split}\end{equation*}
	and $V (x)$ is some function in $C_c^{\infty}(0,\infty)$ supported on $[1, 2]$, satisfying  $V^{(j)} (x) \Lt_{j} 1$. 
	
	Recall that the Rankin--Selberg estimate in \eqref{2eq: Ramanujan} yields the trivial bound $S(N)\Lt N $. Therefore it suffices to prove the bound $S(N) \Lt \sqrt N t^{1/3+\sepsilon}$ in the range $t^{2/3+\sepsilon}<N<t^{1+\sepsilon}$.
	
	Note that $e (f(n)) = N^{it} n^{-it}$ if we choose $\phi (x) = - \log x$, $T = t/2\pi$ and $\gamma = 0$ in \eqref{0eq: f = T phi}. Consequently, Theorem \ref{main-theorem} implies that for $t^{2/3+\sepsilon}< N < t^{1+\sepsilon}$ the sum $S(N)$ has the following bound:
	\begin{align*}
		\frac {S(N)} {\sqrt N} \Lt t^{1/3+\sepsilon}, 
	\end{align*} 
	as desired.
	
	\appendix
	
	\section{Stationary phase}

	Firstly, we have Lemma {\rm8.1} in {\rm\cite{BKY-Mass}} with some improvements.
	
	\begin{lem}\label{lem: staionary phase, dim 1, 2}
		Let $\tw (x)$ be a smooth function    supported on $[ a, b]$ and $f (x)$ be a real smooth function on  $[a, b]$. Suppose that there
		are   parameters $Q, U,   Y, Z,  R > 0$ such that
		\begin{align*}
			f^{(i)} (x) \Lt_{ \, i } Y / Q^{i}, \hskip 10pt \tw^{(j)} (x) \Lt_{ \, j } Z / U^{j},
		\end{align*}
		for  $i \geqslant 2$ and $j \geqslant 0$, and
		\begin{align*}
			| f' (x) | \Gt R. 
		\end{align*}
		Then for any $A \geqslant 0$ we have
		\begin{align*}
			\int_a^b e (f(x)) \tw (x) \nd x \Lt_{ A} (b - a) Z \bigg( \frac {Y} {R^2Q^2} + \frac 1 {RQ} + \frac 1 {RU} \bigg)^A .
		\end{align*}
	\end{lem}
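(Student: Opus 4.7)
\medskip

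\noindent\textbf{Proof proposal.} The plan is the standard non-stationary-phase argument: repeated integration by parts using the differential operator
$$D[h](x)=-\frac{1}{2\pi i}\,\frac{\nd}{\nd x}\!\left(\frac{h(x)}{f'(x)}\right),$$
which is well defined on $[a,b]$ because $|f'|\Gt R>0$ there. Since $\tw$ is compactly supported in $[a,b]$ and $1/f'$ is smooth on $[a,b]$, every iterate $D^n[\tw]$ is again smooth and compactly supported in $[a,b]$, so all boundary terms vanish in each step and one obtains
$$\int_a^b e(f(x))\,\tw(x)\,\nd x=\int_a^b e(f(x))\,D^A[\tw](x)\,\nd x$$
for every integer $A\geqslant 0$. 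Combining this with the trivial bound $\left|\int_a^b e(f)\,D^A[\tw]\,\nd x\right|\leqslant(b-a)\,\|D^A[\tw]\|_\infty$ reduces the lemma to the pointwise estimate
$$\bigl|D^A[\tw](x)\bigr|\Lt_{A} Z\left(\frac{Y}{R^2Q^2}+\frac{1}{RQ}+\frac{1}{RU}\right)^{\!A}.$$

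To prove this estimate I would expand $D^A[\tw]$ by induction using the Leibniz and chain rules, showing that $D^A[\tw]$ is a linear combination of $O_A(1)$ terms of the form
$$\frac{\tw^{(j)}(x)\,f^{(\alpha_1)}(x)\cdots f^{(\alpha_r)}(x)}{\bigl(f'(x)\bigr)^{A+r}},\qquad \alpha_1,\dots,\alpha_r\geqslant 2,\ \ j+\sum_{k=1}^{r}(\alpha_k-1)=A.$$
Plugging in the hypotheses $|\tw^{(j)}|\Lt Z/U^j$, $|f^{(\alpha_k)}|\Lt Y/Q^{\alpha_k}$, and $|f'|\Gt R$ bounds each such term by
$$Z\cdot Y^{r}\, U^{-j}\, Q^{-(A-j+r)}\, R^{-(A+r)}\;=\;Z\cdot\Bigl(\tfrac{Y}{R^2Q^2}\Bigr)^{\!r}\Bigl(\tfrac{1}{RQ}\Bigr)^{\!A-r-j}\Bigl(\tfrac{1}{RU}\Bigr)^{\!j},$$
a monomial of total degree $A$ in the three gain factors. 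The exponent $A-r-j$ is nonnegative because the constraint $\alpha_k\geqslant 2$ gives $\sum(\alpha_k-1)\geqslant r$, hence $r+j\leqslant A$. Summing over all terms produces the desired bound.

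The main obstacle is purely the combinatorial bookkeeping for the induction: one must verify that after $A$ applications of $D$ the power of $f'$ in the denominator of every surviving term is exactly $A+r$ (not $j+r$, as one might guess naively), and that the derivative orders satisfy $j+\sum(\alpha_k-1)=A$. This invariant is checked step by step: each application of $D$ first multiplies by $1/f'$, raising the power of $f'$ by one, and then differentiates; the derivative either falls on $\tw^{(j)}$ (increasing $j$ by one), or on some $f^{(\alpha_k)}$ (increasing $\alpha_k$ by one), or on a factor $(f')^{-m}$, in which case it produces $-m\,f''/(f')^{m+1}$, thereby introducing a new index $\alpha_{r+1}=2$ and raising the $f'$-power by an additional one. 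Once this invariant is in place, the algebraic identification with monomials in $Y/(R^2Q^2)$, $1/(RQ)$, $1/(RU)$ is immediate; in particular, the factor $1/(RQ)$ enters precisely when iterated differentiations of $1/f'$ produce higher derivatives $f^{(\alpha)}$ with $\alpha\geqslant 3$.
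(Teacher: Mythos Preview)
Your argument is correct and is essentially the one the paper has in mind: the paper defers to Lemma~8.1 of \cite{BKY-Mass} (repeated integration by parts with the operator $D$) and only remarks that a more careful count of the $f^{(\alpha_k)}$ factors---exactly the constraint $j+\sum(\alpha_k-1)=A$ you establish---lets one replace $Y^{(\varnu-\mu)/2}$ by an integer power of $Y$ in their (8.6), which removes the hypothesis $Y\geqslant 1$ and yields the stated three-term bound. You have simply written out that refinement in full.
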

	
	\begin{proof}
		In the proof of Lemma {\rm8.1} in {\rm\cite{BKY-Mass}}, one can actually impose an additional condition $ \gamma_2 + \gamma_3 + ... = \varnu - n $ to the inner sum in {\rm(8.5)} so that the $Y^{  {(\varnu-\mu)} / 2}$ may be replaced by $Y^{\varnu - n}$ in {\rm(8.6)} and the sum over $\mu$ should be only up to $2 n - \varnu$. In this way,  their condition $Y \geqslant 1$ becomes unnecessary and their  estimate in {\rm(8.3)} may be improved as above.
		\delete{
			Our simple observation is that if we define $$\text{\usefont{U}{dutchcal}{m}{n} D} (f) (t) = - \frac d {d t} \bigg(\frac {f (t)}  { i h'(t) }\bigg) $$ then $\text{\usefont{U}{dutchcal}{m}{n} D}^n (f) (t) $ is a linear combination of all the terms occurring in  the  expansion of $$  \frac {d^{\shskip n}  \big( h' (t)^n f (t) \big)} {h'(t)^{2n} dt^{\shskip n } } ,$$
			where we have kept the notation from \cite{BKY-Mass}. }
	\end{proof}

	For the reader's convenience, we record here the one- and two-dimensional second derivative tests  (see  \cite[Lemma 5.1.3]{Huxley}, \cite[Lemma 4]{Munshi-Circle-III}).

	\begin{lem}\label{lem: derivative tests, dim 1}
		Let $f (x)$ be a real smooth function on  $[a, b]$. Let $\tw (x)$ be a real smooth function supported on $[ a, b]$ and let $V$ be its total variation\footnote{Since $ \tw (x)$ is supported on $[ a, b]$, we do not need to add its maximum modulus  to $V$ as in \cite[Lemma  5.1.3]{Huxley}.}.  If $ f'' (x) \geqslant \lambdaup > 0 $ on $[a, b]$, then
		\begin{align*}
			\left|\int_a^b e (f(x)) \tw (x) \nd x \right| \leqslant \frac {4 V} {\sqrt {\pi \lambdaup}}. 
		\end{align*}
	\end{lem}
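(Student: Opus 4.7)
The plan is to reduce the weighted bound to a uniform bound on the unweighted integral via Riemann--Stieltjes integration by parts, and then establish that uniform bound by the classical split into stationary and non-stationary regions, refining to get the sharp constant by a Fresnel reduction.

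First, set $F(t) = \int_a^t e(f(x))\,\nd x$. Since $\tw$ is smooth with support inside $[a, b]$, we have $\tw(a) = \tw(b) = 0$, and so integration by parts gives
\begin{equation*}
\int_a^b e(f(x)) \tw(x)\,\nd x = -\int_a^b F(x)\,\nd \tw(x),
\end{equation*}
hence
\begin{equation*}
\left|\int_a^b e(f(x)) \tw(x)\,\nd x\right| \leqslant V \cdot \sup_{t \in [a, b]} |F(t)|.
\end{equation*}
It thus suffices to show $|F(t)| \leqslant 4 / \sqrt{\pi \lambdaup}$ uniformly in $t$.

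For the uniform bound, since $f'' \geqslant \lambdaup > 0$, the derivative $f'$ is strictly increasing and has at most one zero $c \in [a, t]$; if none exists, take $c$ to be the endpoint where $|f'|$ is smallest. Split $[a, t]$ into the stationary part $[c - \delta, c + \delta]$ and its complement. On the stationary part the integrand is bounded by $1$, contributing at most $2\delta$. On the complement, $|f'(x)| \geqslant \lambdaup \delta$; integrating by parts via $e(f)\,\nd x = (2\pi i f')^{-1}\,\nd e(f)$ and using monotonicity of $1/f'$ (a consequence of $f'' \geqslant 0$), the contribution is $O(1/\lambdaup \delta)$. Optimizing $\delta \asymp 1/\sqrt{\lambdaup}$ yields the qualitative $|F(t)| = O(1/\sqrt{\lambdaup})$. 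To get the sharp constant, I would instead change variables to $u = \mathrm{sgn}(x-c)\sqrt{2(f(x)-f(c))}$; the hypothesis $f'' \geqslant \lambdaup$ makes this a smooth monotone diffeomorphism with $|\nd x / \nd u| \leqslant 1/\sqrt{\lambdaup}$, whence
\begin{equation*}
F(t) = e(f(c)) \int_{u(a)}^{u(t)} e(u^2/2) \frac{\nd x}{\nd u}\,\nd u,
\end{equation*}
and a further integration by parts together with the classical bound on the Fresnel integral $\int e(u^2/2)\,\nd u$ gives the constant $4/\sqrt{\pi\lambdaup}$.

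The main technical obstacle is precisely extracting this sharp constant $4/\sqrt{\pi\lambdaup}$: the crude split-and-integrate-by-parts stage only yields $C/\sqrt{\lambdaup}$ with $C > 4/\sqrt{\pi}$, so one genuinely needs the Fresnel reduction, where the subtlety lies in controlling the Jacobian $\nd x / \nd u$ uniformly. Fortunately, the lemma is used in this paper only as a qualitative $O(1/\sqrt{\lambdaup})$ bound (e.g. in the proof of Lemma \ref{lem: bound for J}), so the crude stage of the argument already suffices for all applications here; the sharp form is available in Huxley \cite[Lemma 5.1.3]{Huxley}, which is the reference cited just after the lemma.
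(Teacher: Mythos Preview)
The paper does not prove this lemma; it simply records it and cites \cite[Lemma 5.1.3]{Huxley}. Your reduction to a uniform bound on $F(t)=\int_a^t e(f(x))\,\nd x$ via integration by parts against $\nd\tw$ is correct and is exactly how the weight is handled (the footnote in the statement explains why the $\max|\tw|$ term from Huxley disappears). Your qualitative split argument is also correct, and in fact \emph{that} argument, not the Fresnel reduction, is what yields the stated constant: on the complement of $[c-\delta,c+\delta]$ one has $|f'|\geqslant\lambdaup\delta$ with $1/f'$ monotone, so the sharp first--derivative bound $\bigl|\int e(f)\,\nd x\bigr|\leqslant 1/(\pi\lambdaup\delta)$ applies to each side; together with the trivial $2\delta$ from the near part and optimizing $\delta=1/\sqrt{\pi\lambdaup}$ one obtains exactly $4/\sqrt{\pi\lambdaup}$. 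This is Huxley's proof.

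Your Fresnel reduction, by contrast, has a genuine gap. You correctly establish $|\nd x/\nd u|\leqslant 1/\sqrt{\lambdaup}$, but a uniform bound on the Jacobian $h(u)=\nd x/\nd u$ is \emph{not} enough to bound $\int h(u)\,e(u^2/2)\,\nd u$ independently of the length of the $u$--interval: the ``further integration by parts'' you invoke would produce a term $\int G(u)\,h'(u)\,\nd u$, and you have no control on the total variation of $h$. One would need monotonicity of $h$ (or equivalently a sign condition on $(f')^2-2f''(f-f(c))$), which does not follow from $f''\geqslant\lambdaup$ alone. Since you correctly observe that only the qualitative $O(1/\sqrt{\lambdaup})$ is used in the paper (in Lemma~\ref{lem: bound for J}) and you ultimately defer the sharp form to \cite{Huxley}, just as the paper does, the proposal is adequate for the present purposes; but the Fresnel paragraph should be dropped or replaced by the sharpened split argument above.
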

	
	\begin{lem}\label{lem: 2nd derivative test, dim 2}
		Let  $f (x, y   )$ be a real smooth function on  $[a, b] \times [c, d]$ with 
		\begin{align*}
			& \left|\partial^2 f / \partial x^2 \right| \Gt \lambdaup > 0, \hskip 15pt  \left|\partial^2 f / \partial y   ^2 \right| \Gt \rho > 0, \\
			& |\det f''|  = \left|\partial^2 f / \partial x^2 \cdot \partial^2 f / \partial y   ^2 - (  \partial^2 f / \partial x \partial y    )^2 \right| \Gt \lambdaup \shskip \rho,
		\end{align*}
		on the rectangle $[a, b] \times [c, d]$.   Let $\tw (x, y   )$ be a real smooth function supported on $[ a, b]  \times [c, d]$ and let 
		\begin{align*}
			V = \int_a^b \int_c^d \left|  \frac {\partial^2 \tw(x, y   )} {\partial x \partial y   } \right| \nd x \shskip \nd y   .
		\end{align*}
		Then
		\begin{align*}
			\int_a^b \int_c^d e (f(x, y   )) \tw (x, y   ) \nd x \shskip \nd y     \Lt \frac { V } {\sqrt { \lambdaup \shskip \rho}},
		\end{align*}
		with an absolute implied constant.
	\end{lem}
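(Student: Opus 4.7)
The plan is to prove this two-dimensional bound by iterating the one-dimensional second derivative test (Lemma \ref{lem: derivative tests, dim 1}) together with a stationary-phase analysis in the inner variable, exploiting a cancellation of $f_{xx}$ that converts the product of amplitude and outer second-derivative factors into $1/\sqrt{|\det f''|}$.

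First I would reduce the problem to a weightless estimate. Since $\tw$ is smooth and supported in $[a,b]\times[c,d]$, the fundamental theorem of calculus gives $\tw(x,y)=\int_x^b\int_y^d\partial_s\partial_t\tw(s,t)\,\nd t\,\nd s$, so
\begin{align*}
\int_a^b\int_c^d e(f(x,y))\tw(x,y)\,\nd x\,\nd y=\int_a^b\int_c^d\frac{\partial^2\tw(s,t)}{\partial s\partial t}\,J(s,t)\,\nd s\,\nd t,
\end{align*}
where $J(s,t):=\int_a^s\int_c^t e(f(x,y))\,\nd x\,\nd y$. In view of the definition of $V$, it then suffices to establish the uniform bound $|J(s,t)|\Lt 1/\sqrt{\lambda\rho}$.

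Next I would handle $J(s,t)$ by two stationary-phase steps. For fixed $y$, the hypothesis $|\partial_x^2 f|\Gt\lambda$ leaves at most one stationary point $x_0(y)\in(a,s)$, smooth in $y$ by the implicit function theorem with $x_0'(y)=-f_{xy}/f_{xx}$. The standard one-dimensional stationary-phase asymptotic (with the non-stationary contribution controlled by Lemma \ref{lem: staionary phase, dim 1, 2}) yields
\begin{align*}
\int_a^s e(f(x,y))\,\nd x=\frac{e(f(x_0(y),y)+\eta/8)}{\sqrt{|f_{xx}(x_0(y),y)|}}+\text{error},
\end{align*}
with $\eta=\mathrm{sgn}(f_{xx})$. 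Differentiating $f_x(x_0(y),y)\equiv 0$ implicitly produces
\begin{align*}
g''(y):=\frac{d^2}{dy^2}f(x_0(y),y)=f_{yy}-\frac{f_{xy}^2}{f_{xx}}=\frac{\det f''}{f_{xx}},
\end{align*}
so $|g''(y)|\Gt|\det f''|/|f_{xx}|$. Applying Lemma \ref{lem: derivative tests, dim 1} to the $y$-integral of the main term combines an amplitude factor $1/\sqrt{|f_{xx}|}$ with a second-derivative factor $1/\sqrt{|g''|}=\sqrt{|f_{xx}|/|\det f''|}$; the two factors of $|f_{xx}|$ cancel, leaving precisely $1/\sqrt{|\det f''|}\Lt 1/\sqrt{\lambda\rho}$. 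Notably, no upper bound on $|f_{xx}|$ is required.

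The hard part will be controlling the stationary-phase error and boundary terms uniformly in $s$ and $t$, especially in the regime where $x_0(y)$ lies close to the endpoint $s$ or fails to exist altogether, so that the resulting error does not inflate the final bound. This is a delicate but routine matter, worked out in full detail in \cite[Lemma 5.1.3]{Huxley} and \cite[Lemma 4]{Munshi-Circle-III} cited in the text, and no technology beyond those standard arguments is needed here.
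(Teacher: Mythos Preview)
The paper does not actually prove this lemma: it is merely \emph{recorded} in the appendix with a pointer to \cite[Lemma 5.1.3]{Huxley} and \cite[Lemma 4]{Munshi-Circle-III}. So there is no ``paper's own proof'' to compare against; your proposal goes further than the paper by sketching the argument.

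Your sketch is the standard proof, and the key mechanism is correctly identified. The reduction $\tw(x,y)=\int_x^b\int_y^d \partial_s\partial_t\tw\,\nd t\,\nd s$ plus Fubini correctly reduces matters to the weightless integral $J(s,t)$, and the chain-rule computation $g''(y)=\det f''/f_{xx}$ is exactly the point of the lemma: it explains why the amplitude $|f_{xx}|^{-1/2}$ from the inner stationary phase cancels against the outer second-derivative factor to leave $|\det f''|^{-1/2}$. One caution: when you invoke Lemma \ref{lem: derivative tests, dim 1} for the $y$-integral, the amplitude $|f_{xx}(x_0(y),y)|^{-1/2}$ is not constant, so its total variation enters and a priori involves higher derivatives of $f$ not covered by the hypotheses. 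This is precisely the sort of endpoint/error bookkeeping you flag in your last paragraph, and it is indeed handled in the cited references (Huxley in particular works with the weightless $J(s,t)$ directly via the van der Corput lemma rather than a full asymptotic expansion, which sidesteps the amplitude-variation issue). Since you explicitly defer to those sources for the details, the proposal is sound as a sketch.
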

	
	Finally,  
	the following stationary phase estimate is from \cite[Theorem 1.1.1]{Sogge}. 
	
	\begin{lem}\label{lem: stationary phase estimates, dim 1}
		Let $Z >0$ and $  {\lambdaup} \geqslant  1$.  	Let $\tw (x; \lambdaup)$ be a smooth function supported on $[ a, b ]$ for all $\lambdaup$, and $f (x)$ be a real smooth function on an open neighborhood of $[a, b]$. Suppose that  $  \lambdaup^{j} \partial_x^{i} \partial_\lambdaup^{  j}  \tw  (x; \lambdaup) \Lt_{ \, i, \, j } Z
		$ and that  $f(x_0) = f'(x_0) = 0$ at a point  $  x_0 \in (a, b)$, with $ f'' (x_0) \neq 0$ and $f' (x) \neq 0$ for all $x \in [a, b] \smallsetminus \{x_0\} $. Then
		\begin{align*}
			\frac {\nd^{  j}} {\nd  \lambdaup^{  j}} \int_a^b e (\lambdaup f(x)) \tw (x; \lambdaup) \, \nd x \Lt_{\, j} \frac  Z {  \lambdaup^{   1/2 +  j } }.
		\end{align*}
	\end{lem}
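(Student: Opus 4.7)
The plan is to reduce to the model oscillator $f(u)=\pm u^{2}$ via the Morse lemma and then combine the classical one-dimensional stationary-phase estimate with a Leibniz expansion in $\lambdaup$. Since $f(x_{0})=f'(x_{0})=0$ and $f''(x_{0})\neq 0$, there is a smooth diffeomorphism $x=\chi(u)$ from a neighborhood of $0$ onto a neighborhood of $x_{0}$ such that $f(\chi(u))=\epsilon u^{2}$ with $\epsilon=\mathrm{sgn}\,f''(x_{0})$. Because $f$ does not depend on $\lambdaup$, neither does $\chi$. After a smooth partition of unity that localizes near $x_{0}$ (the complement contributes $O_{A}(\lambdaup^{-A}Z)$ by non-stationary phase, Lemma \ref{lem: staionary phase, dim 1, 2}), we may replace the integral by $I(\lambdaup)=\int_{\BR}e(\epsilon\lambdaup u^{2})\,\psi(u;\lambdaup)\,\nd u$, where $\psi(u;\lambdaup)=\tw(\chi(u);\lambdaup)\chi'(u)$ is smooth, compactly supported in $u$, and inherits the bound $\lambdaup^{j}\partial_{u}^{i}\partial_{\lambdaup}^{j}\psi\Lt_{i,j}Z$.

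Next I would differentiate under the integral sign. Since $\partial_{\lambdaup}e(\epsilon\lambdaup u^{2})=2\pi i\epsilon u^{2}\,e(\epsilon\lambdaup u^{2})$, Leibniz's rule gives
\begin{equation*}
\frac{\nd^{j}}{\nd\lambdaup^{j}}I(\lambdaup)=\sum_{m=0}^{j}\binom{j}{m}(2\pi i\epsilon)^{m}\int_{\BR}u^{2m}\,e(\epsilon\lambdaup u^{2})\,\partial_{\lambdaup}^{\,j-m}\psi(u;\lambdaup)\,\nd u.
\end{equation*}
The amplitude $u^{2m}\partial_{\lambdaup}^{j-m}\psi(u;\lambdaup)$ vanishes to order $2m$ at the stationary point $u=0$ and satisfies a bound of size $Z\lambdaup^{-(j-m)}$ (using the derivative hypothesis on $\psi$). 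The key input is then the weighted stationary-phase estimate
\begin{equation*}
\int_{\BR}u^{2m}\,e(\epsilon\lambdaup u^{2})\,\omega(u)\,\nd u\Lt\|\omega\|_{C^{2m+1}}\,\lambdaup^{-1/2-m},
\end{equation*}
valid for smooth compactly supported $\omega$. This is proved by rescaling $u=v/\sqrt{\lambdaup}$ inside a region $|u|\leqslant\lambdaup^{-1/2+\delta}$ and by non-stationary phase (integration by parts) outside it, or equivalently by quoting the standard oscillatory integral estimate with amplitude vanishing to order $2m$ at the critical point.

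Combining the displayed estimate (applied to $\omega=\partial_{\lambdaup}^{\,j-m}\psi(\cdot;\lambdaup)$, whose $C^{2m+1}$-norm is $\Lt Z\lambdaup^{-(j-m)}$) with the Leibniz expansion, each term contributes $\Lt Z\lambdaup^{-(j-m)}\cdot\lambdaup^{-1/2-m}=Z\lambdaup^{-1/2-j}$, and summing over $m$ yields the asserted bound. The main obstacle is the weighted stationary-phase inequality; everything else is routine. Since this estimate is precisely Theorem 1.1.1 of \cite{Sogge}, one could equivalently invoke that result directly, after verifying that the hypotheses on $\psi$ transfer faithfully through the $\lambdaup$-independent Morse change of variables.
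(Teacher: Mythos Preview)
The paper does not actually prove this lemma: it is simply quoted from \cite[Theorem 1.1.1]{Sogge} with no argument given. Your sketch is correct and is essentially the standard route to the estimate --- Morse-lemma reduction to the model phase $\pm u^{2}$ (legitimate here since $f$ is $\lambdaup$-independent, so the change of variables does not interfere with the $\lambdaup$-derivatives), Leibniz expansion in $\lambdaup$, and then the weighted stationary-phase bound for amplitudes vanishing to order $2m$, which one obtains by $m$ integrations by parts against $\nd e(\epsilon\lambdaup u^{2})=4\pi i\epsilon\lambdaup u\,e(\epsilon\lambdaup u^{2})\,\nd u$ followed by the basic $\lambdaup^{-1/2}$ estimate. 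One small point worth making explicit: you dispose of the piece supported away from $x_{0}$ \emph{before} differentiating in $\lambdaup$, but the claim needed is for $\nd^{j}/\nd\lambdaup^{j}$ of that piece; this is harmless since each $\lambdaup$-derivative only introduces bounded factors $2\pi i f(x)$ or $\partial_{\lambdaup}$ of the amplitude, so Lemma~\ref{lem: staionary phase, dim 1, 2} still yields $O_{A}(Z\lambdaup^{-A})$. In short, you have supplied more than the paper does, and what you supplied is sound.
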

	
	\begin{acknowledgement}
		We would like to thank  Philippe Michel, Qingfeng Sun, and the referee for their valuable comments. 
	\end{acknowledgement}


\begin{thebibliography}{AKMS}
		
		\bibitem[AKMS]{ARSS}
		R.~Acharya, S.~Kumar, G.~Maiti, and S.~K. Singh.
		\newblock Subconvexity bound for {${\rm GL}(2)$} {$L$}-functions: {$t$}-aspect.
		\newblock {\em Acta Arith.}, 194(2):111--133, 2020.
		
		\bibitem[Agg]{Aggarwal}
		K.~Aggarwal.
		\newblock Weyl bound for {$\rm GL(2)$} in {$t$}-aspect via a simple delta
		method.
		\newblock {\em J. Number Theory}, 208:72--100, 2020.
		
		\bibitem[AHLS]{AHLS}
		K.~Aggarwal, R.~Holowinsky, Y.~Lin, and Q.~Sun.
		\newblock The {B}urgess bound via a trivial delta method.
		\newblock {\em  Ramanujan J. (to appear), arXiv:1803.00542}, 2018.
		
		
		\bibitem[AS]{Aggarwal-Singh2017}
		K.~Aggarwal and S.~K. Singh.
		\newblock {$t$}-aspect subconvexity bound for {$\mathrm{GL}(2)$}
		{$L$}-functions.
		\newblock {\em preprint, arXiv:1706.04977}, 2017.
		
		
		
		\bibitem[BKY]{BKY-Mass}
		V.~Blomer, R.~Khan, and M.~P. Young.
		\newblock Distribution of mass of holomorphic cusp forms.
		\newblock {\em Duke Math. J.}, 162(14):2609--2644, 2013.
		
		\bibitem[BMN]{BMN}
		A.~R. Booker, M.~B. Milinovich, and N.~Ng.
		\newblock Subconvexity for modular form {$L$}-functions in the {$t$} aspect.
		\newblock {\em Adv. Math.}, 341:299--335, 2019.
		
		\bibitem[Del]{Deligne}
		P.~Deligne.
		\newblock La conjecture de {W}eil. {I}.
		\newblock {\em Inst. Hautes \'{E}tudes Sci. Publ. Math.}, (43):273--307, 1974.
		
		\bibitem[DS]{Deligne-Serre}
		P.~Deligne and J.-P. Serre.
		\newblock Formes modulaires de poids {$1$}.
		\newblock {\em Ann. Sci. \'{E}cole Norm. Sup. (4)}, 7:507--530 (1975), 1974.
		
		\bibitem[DFI]{DFI-1}
		W.~Duke, J.~Friedlander, and H.~Iwaniec.
		\newblock Bounds for automorphic {$L$}-functions.
		\newblock {\em Invent. Math.}, 112(1):1--8, 1993.
		
		\bibitem[God]{Additive-Godber}
		D.~Godber.
		\newblock Additive twists of {F}ourier coefficients of modular forms.
		\newblock {\em J. Number Theory}, 133(1):83--104, 2013.
		
		\bibitem[Goo]{Good}
		A.~Good.
		\newblock The square mean of {D}irichlet series associated with cusp forms.
		\newblock {\em Mathematika}, 29(2):278--295 (1983), 1982.
		
		\bibitem[GR]{G-R}
		I.~S. Gradshteyn and I.~M. Ryzhik.
		\newblock {\em Table of {I}ntegrals, {S}eries, and {P}roducts}.
		\newblock Elsevier/Academic Press, Amsterdam, 7th edition, 2007.
		
		\bibitem[Har]{Harcos02}
		G.~Harcos.
		\newblock Uniform approximate functional equation for principal
		{$L$}-functions.
		\newblock {\em Int. Math. Res. Not.}, (18):923--932, 2002; Erratum. ibid.,
		(13):659--660, 2004.
		
		\bibitem[Hux]{Huxley}
		M.~N. Huxley.
		\newblock {\em Area, {L}attice {P}oints, and {E}xponential {S}ums},  {\em London Mathematical Society Monographs.  New Series},  13.
		\newblock The Clarendon Press, Oxford University Press, New York, 1996.
		
		\bibitem[ILS]{ILS-LLZ}
		H.~Iwaniec, W.~Luo, and P.~Sarnak.
		\newblock Low lying zeros of families of {$L$}-functions.
		\newblock {\em Inst. Hautes \'Etudes Sci. Publ. Math.}, (91):55--131 (2001),
		2000.
		
		\bibitem[Iwa]{Iwaniec-Topics}
		H.~Iwaniec.
		\newblock {\em Topics in {C}lassical {A}utomorphic {F}orms},  {\em
			Graduate Studies in Mathematics},  17.
		\newblock American Mathematical Society, Providence, RI, 1997.
		
		\bibitem[Jut1]{Jut87}
		M.~Jutila.
		\newblock {\em Lectures on a {M}ethod in the {T}heory of {E}xponential {S}ums},
		{\em Tata Institute of Fundamental Research Lectures on Mathematics and Physics},  80.
		\newblock  
		Springer-Verlag, Berlin, 1987.
		
		\bibitem[Jut2]{Jutila97}
		Matti Jutila.
		\newblock Mean values of {D}irichlet series via {L}aplace transforms.
		\newblock In {\em Analytic number theory ({K}yoto, 1996)}, {\em
			London Math. Soc. Lecture Note Ser.},  247, pages 169--207. Cambridge Univ. Press,
		Cambridge, 1997.
		
		\bibitem[KMS]{Kumar-M-S}
		S.~Kumar, K.~Mallesham, and S.~K. Singh.
		\newblock Non-linear additive twist of {F}ourier coefficients of
		$\mathrm{GL}(3)$ {M}aass forms.
		\newblock {\em preprint, arXiv:1905.13109}, 2019.
		
		\bibitem[KMV]{KMV}
		E.~Kowalski, P.~Michel, and J.~VanderKam.
		\newblock Rankin-{S}elberg {$L$}-functions in the level aspect.
		\newblock {\em Duke Math. J.}, 114(1):123--191, 2002.
		
		\bibitem[LR]{Ren-Liu-quadratic}
		K.~Liu and X.~Ren.
		\newblock On exponential sums involving {F}ourier coefficients of cusp forms.
		\newblock {\em J. Number Theory}, 132(1):171--181, 2012.
		
		\bibitem[Meu]{Meu87}
		T.~Meurman.
		\newblock On the order of the {M}aass {$L$}-function on the critical line.
		\newblock In {\em Number theory, {V}ol.\ {I} ({B}udapest, 1987)},  
		{\em Colloq. Math. Soc. J\'anos Bolyai},  51, pages 325--354. North-Holland,
		Amsterdam, 1990.
		
		\bibitem[Mil]{Miller-Wilton}
		S.~D. Miller.
		\newblock Cancellation in additively twisted sums on {${\rm GL}(n)$}.
		\newblock {\em Amer. J. Math.}, 128(3):699--729, 2006.
		
		\bibitem[MOS]{MO-Formulas}
		W.~Magnus, F.~Oberhettinger, and R.~P. Soni.
		\newblock {\em Formulas and {T}heorems for the {S}pecial {F}unctions of
			{M}athematical {P}hysics}.
		\newblock 3rd enlarged edition. Die Grundlehren der mathematischen
		Wissenschaften, Band 52. Springer-Verlag New York, Inc., New York, 1966.
		
		
		
		\bibitem[Mun1]{Munshi-Circle-III}
		R.~Munshi.
		\newblock The circle method and bounds for {$L$}-functions---{III}:
		{$t$}-aspect subconvexity for {$\mathrm{GL}(3)$} {$L$}-functions.
		\newblock {\em J. Amer. Math. Soc.}, 28(4):913--938, 2015.
		
		\bibitem[Mun2]{Munshi-Circle-IV}
		R.~Munshi.
		\newblock The circle method and bounds for {$L$}-functions---{IV}:
		{S}ubconvexity for twists of {$\rm GL(3)$} {$L$}-functions.
		\newblock {\em Ann. of Math. (2)}, 182(2):617--672, 2015.
		
		\bibitem[Mun3]{Munshi-Circle-2016}
		R.~Munshi.
		\newblock Twists of {${\rm GL}(3)$} {$L$}-functions.
		\newblock {\em preprint, arXiv:1604.08000}, 2016.
		
		
		\bibitem[Mun4]{Munshi17.5}
		R.~Munshi.
		\newblock A note on {B}urgess bound.
		\newblock In {\em Geometry, Algebra, Number Theory, and Their Information
			Technology Applications},   {\em Springer Proc. Math. Stat.},  251,
		pages 273--289. Springer, Cham, 2018.
		
		\bibitem[Mun5]{Munshi2018-6}
		R.~Munshi.
		\newblock Sub-{W}eyl bounds for $\mathrm{GL}(2)$ {$L$}-functions.
		\newblock {\em preprint, arXiv:1806.07352}, 2018.
		
		\bibitem[Pit]{Pitt-quadratic}
		N.~J.~E. Pitt.
		\newblock On cusp form coefficients in exponential sums.
		\newblock {\em Q. J. Math.}, 52(4):485--497, 2001.
		
		
		
		
		
		\bibitem[RY1]{RenYe}
		X.~Ren and Y.~Ye.
		\newblock Resonance between automorphic forms and exponential functions.
		\newblock {\em Sci. China Math.}, 53(9):2463--2472, 2010.
		
		\bibitem[RY2]{RenYe2}
		X.~Ren and Y.~Ye.
		\newblock Resonance of automorphic forms for {$\mathrm{GL}(3)$}.
		\newblock {\em Trans. Amer. Math. Soc.}, 367(3):2137--2157, 2015.
		
		\bibitem[RY3]{RenYe3}
		X.~Ren and Y.~Ye.
		\newblock Resonance and rapid decay of exponential sums of {F}ourier
		coefficients of a {M}aass form for {$\mathrm{GL}_m(\mathbb{Z})$}.
		\newblock {\em Sci. China Math.}, 58(10):2105--2124, 2015.
		
		\bibitem[Sog]{Sogge}
		C.~D. Sogge.
		\newblock {\em Fourier {I}ntegrals in {C}lassical {A}nalysis}, 
		{\em Cambridge Tracts in Mathematics},  105.
		\newblock Cambridge University Press, Cambridge, 1993.
		
		\bibitem[Sun]{Sun-1}
		Q.~Sun.
		\newblock On cusp form coefficients in nonlinear exponential sums.
		\newblock {\em Q. J. Math.}, 61(3):363--372, 2010.
		
		\bibitem[SW]{Sun-Wu}
		Q.~Sun and Y.~Wu.
		\newblock Exponential sums involving {M}aass forms.
		\newblock {\em Front. Math. China}, 9(6):1349--1366, 2014.
		
		\bibitem[Wat]{Watson}
		G.~N. Watson.
		\newblock {\em A {T}reatise on the {T}heory of {B}essel {F}unctions}.
		\newblock Cambridge University Press, Cambridge, England; The Macmillan
		Company, New York, 1944.
		
		\bibitem[WW]{Whittaker-Watson}
		E.~T. Whittaker and G.~N. Watson.
		\newblock {\em A {C}ourse of {M}odern {A}nalysis}.
		\newblock 4th edition. Reprinted. Cambridge University Press, New York,
		1962.
		
	\end{thebibliography}

\end{document}